\documentclass[11pt]{article}
\usepackage[T1]{fontenc}
\usepackage[utf8]{inputenc}
\usepackage[margin=1.08in]{geometry}
\usepackage{amsmath,amssymb,amsthm,mathtools}
\usepackage{enumitem}
\usepackage[colorlinks=true,linkcolor=blue,citecolor=blue,urlcolor=blue]{hyperref}

\newtheorem{theorem}{Theorem}[section]
\newtheorem{proposition}[theorem]{Proposition}
\newtheorem{lemma}[theorem]{Lemma}
\newtheorem{corollary}[theorem]{Corollary}
\theoremstyle{remark}
\newtheorem{remark}[theorem]{Remark}

\newcommand{\Z}{\mathbb Z}
\newcommand{\ex}{\operatorname{ex}}

\begin{document}
\title{On zero-sum Ramsey numbers of cycles and wheels}
\author{Cheng Chi\footnote{School of Mathematical Sciences, Shanghai Jiao Tong University, 800 Dongchuan Road, Shanghai 200240, China.
        Email: chengchi@sjtu.edu.cn.
        Supported by National Key R\&D Program of China under grant No. 2022YFA1006400 and National Natural Science Foundation of China No. 12571376.}
    \and Jialin He\footnote{School of Mathematical Sciences,  Key Laboratory of MEA (Ministry of Education) and Shanghai Key Laboratory of PMMP,  East China Normal University, Shanghai 200241, China.
        Email: jlhe@math.ecnu.edu.cn.
        Supported in part by Science and Technology Commission of Shanghai Municipality No. 22DZ2229014.}}
\date{}

\maketitle

\begin{abstract}
    For an integer $q\ge 2$ and a graph $F$ with $q\mid e(F)$, let $R(F,\Z_q)$ be the least integer $n$ such that every edge-labeling
    $w\colon E(K_n)\to \Z_q$ contains a copy of $F$ whose edge-label sum is zero in $\Z_q$.
    Let $C_{qk}$ denote the cycle on $qk$ vertices.
    We prove that
    \[
        R(C_{qk},\Z_q)\le \max\{R(C_{2q},\Z_q),qk+q-1\}
    \]
    via an insertion argument rooted in the classic Erd\H{o}s-Ginzburg-Ziv theorem.
    Combined with Pikhurko's result and Katz, Lian, Malekshahian, and Shapiro's result, we obtain that for every $q\ge 3$,
    $R(C_{2q},\Z_q)\le \min\{35q^2,D_0q\}$, and hence
    $R(C_{qk},\Z_q)\le \max\{\min\{35q^2,D_0q\},qk+q-1\}$, where $D_0$ is a constant independent of $k$ and $q$.
    We also show that $R(C_{qk},\Z_q)\ge qk+q-1$ for odd $q\ge 3$.
    Hence, for every fixed odd $q\ge 3$ and every $k\ge \min\{35q,D_0\}$, we obtain the exact value
    \[
        R(C_{qk},\Z_q)=qk+q-1.
    \]
    For even $q\ge 4$, we show that
    \[qk+\frac q2-1\le R(C_{qk},\Z_q)\le \max\{\min\{35q^2,D_0q\},qk+q-1\},
    \]
    leaving an additive gap of order $q/2$ when $k$ is large.

    Let \(W_m(s)\coloneqq C_m + \overline{K_s}\) be the graph consisting of a cycle of length $m$ and $s$ independent vertices, each of which is adjacent to every vertex on the cycle.
    Note that when $s=1$, this is the wheel graph with a rim of length $m,$ which we denote simply as $W_m.$
    Let $p$ be an odd prime, and let $s\ge1$ be a fixed integer satisfying that $p\mid (s+2).$
    Then, there exists a constant $K_0=K_0(p,s)$ such that for every $k\ge K_0,$
    \[R(W_{pk}(s), \Z_p)=pk+s.\]

    Moreover, for the case $q=3$, we prove that for all \(k \ge 2\), \(R(C_{3k}, \mathbb{Z}_3) = 3k + 2\) and \(R(W_{3k}, \mathbb{Z}_3) = 3k + 1\).
\end{abstract}

\section{Introduction}

Zero-sum Ramsey theory asks for copies of a prescribed graph whose edge labels sum to the identity element of a finite Abelian group.
The subject was initiated by Bialostocki and Dierker~\cite{BD1990,BD1992} and is closely tied to additive zero-sum results such as the Erd\H{o}s-Ginzburg-Ziv theorem~\cite{EGZ1961} and the generalized Cauchy-Davenport theorem~\cite{Cauchy1813,Davenport1935}.

Unless otherwise stated, let $q\ge2$ be an integer.
We work throughout with the cyclic group $\Z_q$.
For a graph $G$, the \emph{zero-sum Ramsey number} $R(G,\Z_q)$ is the least integer $n$ such that every edge-labeling $w\colon E(K_n)\to \Z_q$ contains a copy $G'$ of $G$ with $\sum_{e\in E(G')} w(e)=0$ in $\Z_q$.
Note that the divisibility condition $q\mid e(G)$ is necessary for any general zero-sum Ramsey statement.
In fact, if $q\nmid e(G)$, then for every $n$, in the edge-labeling of $K_n$ defined by $w(e)=1$ for every $e$, every copy $G'$ of $G$ satisfies $\sum_{e\in E(G')} w(e)=1\cdot e(G)\ne 0$ in $\Z_q$.
Whenever $q\mid e(G)$, the existence of the zero-sum Ramsey number is guaranteed by the standard multicolor Ramsey theorem.
Indeed, any monochromatic copy of $G$ arising from a $q$-coloring is necessarily zero-sum.

General upper bounds have been established for certain families of graphs, such as complete graphs~\cite{AC1993,Caro1992Complete,Ca1994}, stars~\cite{Ca1992}, forests~\cite{CD2026}, bounded degree graphs~\cite{KLMS2025} and $d$-degenerate graphs~\cite{Shapiro2026}.
However, the exact value of \(R(G, \Z_q)\) is known for only a few special instances of $G$ and $q$, such as when \(G\) is a star and \(q\) is even, or when \(G\) is a matching~\cite{BD1992}.
Other notable results on zero-sum Ramsey numbers include the complete determination of $R(G, \Z_2)$~\cite{Caro1994} and the determination of $R(F, \Z_3)$ for forests~\cite{CM2025, ACP2025}.
For further background and early developments in this area, we refer the reader to the survey by Caro~\cite{Caro1996} and the seminal work of Alon and Caro~\cite{AC1993}.

In this paper, we study zero-sum cycles with lengths prescribed as multiples of the modulus.
Cycles are a natural test case; determining the exact value of $R(C_{qk},\Z_q)$ depends on the complex interaction between the cycle length, the modulus, and the underlying zero-sum structure.
Recent work on zero-sum cycles in related settings includes~\cite{CGHS2025,CKMS2025}.

Throughout this paper, all congruences and edge-label sums are taken over $\Z_q$.
For a given graph $H$ and an edge-labeling $w\colon E(K_n)\to \Z_q$, we define the weight of
$H$ as
\[
    w(H)=\sum_{e\in E(H)}w(e)\in \Z_q.
\]
A subgraph $H$ is called \emph{zero-sum} if $w(H)=0$.

For integers $q\ge 2$ and $k\ge 2$, let $C_{qk}$ denote the cycle on $qk$ vertices.
The primary object of this paper is the zero-sum Ramsey number:
\[
    R(C_{qk},\Z_q)=\min\Bigl\{n\colon \text{ every }w\colon E(K_n)\to \Z_q
    \text{ contains a zero-sum }C_{qk}\Bigr\}.
\]

Our first main result reveals that $R(C_{qk},\Z_q)$ is partly controlled by $R(C_{2q},\Z_q)$.

\begin{theorem}\label{thm:reduction}
    For every integer $q\ge 2$ and every $k\ge 2$\footnote{When $k=1$, Caro~\cite{Caro1996} conjectured that $R(C_q,\Z_q)=4q-3$ for odd $q\ge5$.},
    \[
        R(C_{qk},\Z_q)\le \max\{R(C_{2q},\Z_q),qk+q-1\}.
    \]
    Consequently, if $qk+q-1\ge R(C_{2q},\Z_q)$, then every labeling of $K_{qk+q-1}$ by elements of $\Z_q$ contains a zero-sum $C_{qk}$.
\end{theorem}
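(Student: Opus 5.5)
Let $n=\max\{R(C_{2q},\Z_q),qk+q-1\}$ and fix a labeling $w\colon E(K_n)\to\Z_q$. We argue by induction on $k$, with the base case $k=2$ holding because $n\ge R(C_{2q},\Z_q)$. For $k\ge 3$ we have $n\ge q(k-1)+q-1$ and $n\ge R(C_{2q},\Z_q)$, so the induction hypothesis applied inside $K_n$ gives a zero-sum cycle $C$ of length $q(k-1)$. The goal is to insert $q$ new vertices into $C$ while keeping the weight zero. There are at least $n-q(k-1)\ge 2q-1$ vertices outside $C$, and this count $2q-1$ is exactly what the Erd\H{o}s--Ginzburg--Ziv theorem needs.

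\textbf{Insertion step.} Pick an edge $xy$ of $C$ and consider the outside vertices $v_1,\dots,v_{2q-1}$. A first attempt inserts each chosen vertex $v$ between two consecutive cycle vertices. This is awkward, because the replaced edges change as more vertices are inserted. A cleaner version inserts a path: we replace the single edge $xy$ by a path $x\,u_1u_2\cdots u_q\,y$ through $q$ outside vertices, which increases the length by exactly $q$. Its weight change is
\[
w(xu_1)+\sum_{i<q}w(u_iu_{i+1})+w(u_qy)-w(xy).
\]
Controlling this sum directly is hard. Instead we aim for a decomposition in which each inserted vertex $v$ carries an individual ``charge'' $a_v\in\Z_q$, and the total change is $\sum_v a_v$ plus a constant. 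EGZ applied to the $2q-1$ charges then yields $q$ vertices whose charges sum to $0$.

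\textbf{Splitting the edges.} To get such charges, we split $C$ into $q$ distinct edges $x_jy_j$ and insert one outside vertex $v$ into each, replacing $x_jy_j$ with the path $x_jvy_j$. The change at that edge is $w(x_jv)+w(vy_j)-w(x_jy_j)$, which depends on both $v$ and $j$. So plain EGZ does not apply, and we must handle the assignment of vertices to edges. The plan is to first restrict to a set of outside vertices whose charge is independent of $j$. Since the cycle has $q(k-1)\ge 2q$ edges, there is room to choose the $q$ edges. One option is to fix a single edge $xy$ and insert vertices sequentially in a fan pattern. Another is to use several pairs of consecutive edges sharing a structure that makes the charge depend only on $v$ modulo a common shift.

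If the charges cannot be made independent of position, the fallback is a multiset-of-sums argument. We would use the fact that when EGZ-type sums fail to hit $0$, the labels are highly constrained; for example, all outside vertices see $C$ with the same labels. In that constrained situation we would rebuild the cycle differently, using the $R(C_{2q},\Z_q)$ hypothesis on a suitable subset.

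\textbf{Main obstacle.} The crux is making the weight change of the insertion \emph{additive} over the inserted vertices, so that EGZ applies to $2q-1$ candidates. The first thing to try is to reserve a fixed edge $xy$ of $C$ and, for each outside vertex $v$, define $a_v=w(xv)+w(vy)-w(xy)$. Several insertions at the same location form a path whose internal edges are uncontrolled, so this charge alone does not capture the full change. We would therefore exploit the freedom in both choosing and ordering the inserted vertices, possibly by applying EGZ twice, or by using the zero-sum $C_{2q}$ guaranteed on a $q$-element window to absorb the error terms.
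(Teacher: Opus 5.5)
Your induction framework matches the paper's. For $k\ge3$ you obtain a zero-sum cycle $C$ of length $\ell=q(k-1)\ge 2q$ inside $K_n$, with at least $2q-1$ vertices outside it.

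\textbf{The gap is the insertion step, which you never complete.} You note that the change $w(x_jv)+w(vy_j)-w(x_jy_j)$ depends on both the vertex $v$ and the edge $x_jy_j$, and conclude that ``plain EGZ does not apply.'' After that you only list speculative fallbacks: a fan pattern, a ``multiset-of-sums'' argument, the unsupported claim that failure forces all outside vertices to see $C$ identically, applying EGZ twice, and reusing $R(C_{2q},\Z_q)$ on a window. None of these is carried out, so as written the proposal does not prove the theorem.

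\textbf{The missing idea is to fix the assignment of vertices to edges \emph{before} applying EGZ.} Since $C$ has $\ell\ge 2q-1$ edges, choose $2q-1$ distinct edges $a_ib_i$ of $C$. Match them bijectively with $2q-1$ distinct outside vertices $u_i$, and set $d_i=w(a_iu_i)+w(u_ib_i)-w(a_ib_i)$. EGZ applied to $d_1,\dots,d_{2q-1}$ gives an index set $I$ with $|I|=q$ and $\sum_{i\in I}d_i=0$.

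Now replace each $a_ib_i$ with $i\in I$ by the path $a_iu_ib_i$. This subdivides distinct edges by distinct new vertices, so the result is a simple cycle of length $\ell+q$. Each insertion removes only its own original edge and adds two edges at its own new vertex, so the changes do not interact. The new weight is therefore exactly $w(C)+\sum_{i\in I}d_i=0$.

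Your worry that ``the replaced edges change as more vertices are inserted'' only arises if you put several vertices into the same edge. With distinct edges, the charges are additive over the \emph{pairs} $(u_i,a_ib_i)$, which is all EGZ needs.
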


This reduction becomes particularly effective when paired with an upper bound of $R(C_{2q},\Z_q)$.
By applying Pikhurko's Tur\'an bound for even cycles~\cite{Pikhurko2012}, and Katz, Lian, Malekshahian, and Shapiro's zero-sum Ramsey result for bounded degree graphs~\cite{KLMS2025}, we obtain the following estimation for $R(C_{2q},\Z_q)$.

\begin{theorem}\label{thm:basecase}
    For every integer $q\ge 3$, there exists a positive integer $D_0$ independent of $q$ such that
    \[
        R(C_{2q},\Z_q)\le \min\{35q^2,D_0q\}.
    \]
    Consequently, for every $q\ge 3$ and $k\ge 2$,
    \[
        R(C_{qk},\Z_q)\le \max\{\min\{35q^2,D_0q\},qk+q-1\}.
    \]
\end{theorem}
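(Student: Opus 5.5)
Two independent upper bounds are needed, namely $R(C_{2q},\Z_q)\le 35q^2$ and $R(C_{2q},\Z_q)\le D_0q$. The second sentence of the statement then follows by substituting into Theorem~\ref{thm:reduction}.

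For the quadratic bound, I would use a pigeonhole-plus-Tur\'an argument. Given $w\colon E(K_n)\to\Z_q$ with $n=35q^2$, some label class $c\in\Z_q$ contains at least $\binom n2/q$ edges. Any cycle $C_{2q}$ inside that class has weight $2q\cdot c=0$ in $\Z_q$, so it is automatically zero-sum. It therefore suffices that $\binom{n}{2}/q>\ex(n,C_{2q})$. Pikhurko's bound~\cite{Pikhurko2012} gives $\ex(n,C_{2k})\le (k-1)n^{1+1/k}+16(k-1)n$, and I apply it with $k=q$. The required inequality becomes roughly $n/(2q)\gtrsim (q-1)n^{1/q}+16(q-1)$. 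With $n=35q^2$ the left side is about $17.5q$. On the right, $n^{1/q}=(35q^2)^{1/q}$ is bounded and tends to $1$. The danger is the constant: the $16(q-1)$ term alone is already $16q$, so the margin over $17.5q$ is thin. I therefore need $(q-1)(n^{1/q}+16)<(n-1)/(2q)$ precisely, and I expect this numerical check, especially for small $q$ such as $q=3$, to be the main obstacle. For $q=3$ we have $n=315$ and $n^{1/3}\approx 6.8$, which gives $2\cdot 22.8=45.6<52.3$, so the inequality holds. For larger $q$, $n^{1/q}$ decreases toward $1$, so the right side is about $17(q-1)$ while the left side is about $17.5q$. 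I would verify this monotonicity carefully, perhaps splitting into $q\le q_1$, checked by direct computation, and $q>q_1$, checked by crude estimates.

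For the linear bound, I would invoke the Katz--Lian--Malekshahian--Shapiro theorem~\cite{KLMS2025}. As I recall it, it states that for a graph $G$ with maximum degree $\Delta$ and $q\mid e(G)$, one has $R(G,\Z_q)\le C(\Delta)\,|V(G)|$ for a constant depending only on $\Delta$. Applying it to $G=C_{2q}$, which has $\Delta=2$, $|V(G)|=2q$ and $e(G)=2q$ divisible by $q$, gives $R(C_{2q},\Z_q)\le 2C(2)q$. I then set $D_0:=\lceil 2C(2)\rceil$, which is independent of $q$. If the cited result instead has the form $O(|V(G)|)$ with an additive term or a condition "$n$ large", I would enlarge $D_0$ to absorb it, since $q\ge3$.

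Combining the two bounds gives $R(C_{2q},\Z_q)\le\min\{35q^2,D_0q\}$. Plugging this into Theorem~\ref{thm:reduction}, and using that $\max\{\cdot,\cdot\}$ is monotone in its first argument, yields the stated bound on $R(C_{qk},\Z_q)$ for all $k\ge2$.
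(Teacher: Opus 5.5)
Your proposal is correct and follows the paper's proof essentially step for step. The linear bound comes from the Katz--Lian--Malekshahian--Shapiro theorem with $\Delta=2$ and $D_0=\lceil 2C(2)\rceil$. The quadratic bound comes from pigeonhole on the largest label class plus Pikhurko's bound with $r=q$, which reduces to $(35q^2)^{1/q}<\frac32+\frac{35}{2(q-1)}-\frac{1}{2q(q-1)}$. The paper settles this numerical inequality as you planned, splitting into the ranges $3\le q\le 6$, $7\le q\le 12$, $13\le q\le 35$ and $q\ge 36$ and bounding $(35q^2)^{1/q}$ by an explicit constant in each range.
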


To establish lower bounds for $R(C_{qk},\Z_q)$, we prove the following theorem, which depends on the parity of the modulus.

\begin{theorem}\label{thm:lower}
    Let $q\ge 2$ and $k\ge 1$.
    \begin{enumerate}[label=\textup{(\roman*)}]
        \item If $q$ is odd, then $R(C_{qk},\Z_q)\ge qk+q-1$.
        \item If $q$ is even and $q\ge 4$, then $R(C_{qk},\Z_q)\ge qk+\frac q2-1$.
    \end{enumerate}
\end{theorem}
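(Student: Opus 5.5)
To prove both parts of Theorem~\ref{thm:lower}, I would exhibit explicit labelings of $K_{n-1}$ with $n$ the claimed lower bound, using a single construction based on vertex degrees. Split the vertex set as $V=A\cup B$ with $|A|=qk-1$, and label every edge by the number of its endpoints in $B$:
\[
w(uv)=|\{u,v\}\cap B| \in\{0,1,2\}\subseteq \Z_q .
\]
The key identity is the following. For any subgraph $H$ in which every vertex has degree $2$, in particular any cycle,
\[
w(H)=\sum_{uv\in E(H)}|\{u,v\}\cap B|=\sum_{v\in V(H)\cap B}\deg_H(v)=2\,|V(H)\cap B| .
\]
This holds because each edge is counted once for each of its endpoints lying in $B$. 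So the weight of a copy of $C_{qk}$ depends only on $t:=|V(C_{qk})\cap B|$.

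Since $|A|=qk-1<qk$, every copy of $C_{qk}$ meets $B$, so $t\ge 1$. Also $t\le |B|$. It therefore suffices to choose $|B|$ so that $2t\not\equiv 0\pmod q$ for every $1\le t\le |B|$.

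\emph{(i) $q$ odd.} Here $2$ is invertible modulo $q$, so $2t\equiv 0$ forces $q\mid t$. Taking $|B|=q-1$ gives $1\le t\le q-1$, so no copy of $C_{qk}$ is zero-sum. The complete graph then has $qk-1+q-1=qk+q-2$ vertices, which shows $R(C_{qk},\Z_q)\ge qk+q-1$.

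\emph{(ii) $q\ge4$ even.} Now $2t\equiv 0\pmod q$ exactly when $\frac q2\mid t$. Take $|B|=\frac q2-1\ge 1$. Then $2\le 2t\le q-2$, so $2t\not\equiv 0$. This gives a labeling of $K_{qk+q/2-2}$ with no zero-sum $C_{qk}$, hence $R(C_{qk},\Z_q)\ge qk+\frac q2-1$.

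There is no real obstacle in this argument; the only point needing care is the degree-counting identity. In the even case, the threshold $\frac q2$ comes from $2$ failing to be invertible modulo $q$: once $t=\frac q2$ the weight becomes zero. That is why this construction cannot reach $qk+q-1$ for even $q$.
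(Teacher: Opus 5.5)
Your proof is correct and is essentially the paper's construction: you use the same vertex partition into a part of size $qk-1$ and a small part of size $h-1$ (with $h=q$ for odd $q$ and $h=q/2$ for even $q$), so every copy of $C_{qk}$ gets weight twice an integer in $[1,h-1]$. The only difference is that you label edges inside the small part by $2$ instead of $0$. This makes $w$ a vertex-potential labeling $\lambda_u+\lambda_v$, so the weight $2t$ follows at once by degree counting. The paper instead uses a pure cut labeling (weight $1$ on crossing edges, $0$ elsewhere), which gives weight $2s$, where $s$ is the number of segments of the cycle inside the small part, and then needs the bound $1\le s\le t\le h-1$.
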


As a result, the bound for the case when $q$ is odd is exact for all sufficiently large \(k\).
For the case when $q$ is even, our arguments leave a small additive gap of order \(q/2\).

\begin{corollary}\label{cor:odd-exact}
    Let $q\ge 3$ and $k\ge \min\{35q,D_0\}$, where $D_0$ is the positive integer from Theorem~\ref{thm:basecase}.
    \begin{enumerate}[label=\textup{(\roman*)}]
        \item If $q$ is odd, then $R(C_{qk},\Z_q)= qk+q-1$.
        \item If $q$ is even and $q\ge 4$, then $qk+\frac q2-1
                  \le R(C_{qk},\Z_q)
                  \le qk+q-1.$
    \end{enumerate}
\end{corollary}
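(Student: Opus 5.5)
The corollary should follow by combining Theorem~\ref{thm:basecase} with Theorem~\ref{thm:lower}. The only work is to check that, under the hypothesis $k\ge \min\{35q,D_0\}$, the maximum in Theorem~\ref{thm:basecase} equals $qk+q-1$.

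First I will verify the inequality
\[
\min\{35q^2,D_0q\}\le qk+q-1 .
\]
Suppose $k\ge 35q$; this happens in particular when $\min\{35q,D_0\}=35q$. Then $qk\ge 35q^2$, so $qk+q-1\ge 35q^2\ge \min\{35q^2,D_0q\}$. Otherwise $k\ge D_0$, and then $qk+q-1\ge D_0q\ge\min\{35q^2,D_0q\}$. Since $k\ge\min\{35q,D_0\}$ means $k$ is at least one of the two quantities, one of these cases always applies. Also $k\ge 2$ holds automatically, since $35q\ge 105$ and $D_0\ge 2$; if necessary we may enlarge $D_0$ to guarantee the latter. So Theorem~\ref{thm:basecase} applies. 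Its consequence gives
\[
R(C_{qk},\Z_q)\le \max\{\min\{35q^2,D_0q\},qk+q-1\}=qk+q-1
\]
for every $q\ge3$.

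For the lower bounds I will quote Theorem~\ref{thm:lower}. Part (i) gives $R(C_{qk},\Z_q)\ge qk+q-1$ for odd $q$, which matches the upper bound and yields equality. Part (ii) gives $R(C_{qk},\Z_q)\ge qk+\frac q2-1$ for even $q\ge4$, which together with the upper bound is exactly statement (ii).

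No step is a real obstacle. The only point needing care is the case analysis on which term achieves $\min\{35q,D_0\}$, together with the trivial check that $k\ge2$, so that Theorem~\ref{thm:basecase} is applicable.
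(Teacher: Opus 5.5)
Your proposal is correct and is exactly how the paper obtains the corollary: under the hypothesis on $k$, the maximum in the upper bound of Theorem~\ref{thm:basecase} equals $qk+q-1$, and this is combined with the lower bounds of Theorem~\ref{thm:lower}. The hedge about enlarging $D_0$ is unnecessary, since $D_0q\ge R(C_{2q},\Z_q)\ge 2q$ already forces $D_0\ge 2$, and hence $k\ge 2$.
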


\begin{remark}
    The statements above do not require $q$ to be prime or a prime power.
\end{remark}

For an integer $s\ge1$, let $\overline K_s$ denote the empty graph on $s$ vertices, and let $W_m(s)=C_m+\overline K_s$ denote the join of $C_m$ and $\overline K_s$, where the $s$ vertices of $\overline K_s$ are mutually non-adjacent and each of them is adjacent to every vertex of the cycle.
Note that the wheel graph $W_m$ corresponds to the special case when $s=1$.

\begin{theorem}\label{thm:prime-case}
    Let $p$ be an odd prime and let $s\ge1$ be a fixed integer satisfying $s+2\equiv0\pmod p$.  Let $F_0=4(p-2)$ and
    \begin{equation*}
        K_0(p,s)=\max\left\{4,\left\lceil\frac{(s+2F_0)p^{F_0}+F_0-s}{p}\right\rceil\right\}.
    \end{equation*}
    Then, for every $k\ge K_0(p,s)$,
    \[
        R(W_{pk}(s),\Z_p)=pk+s.
    \]
\end{theorem}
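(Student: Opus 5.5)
The lower bound is immediate. $W_{pk}(s)$ has exactly $pk+s$ vertices, so $K_{pk+s-1}$ contains no copy at all, and $R(W_{pk}(s),\Z_p)\ge pk+s$. Note also that $e(W_{pk}(s))=pk(s+1)$ is divisible by $p$, so the number is well defined.

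For the upper bound, fix $w\colon E(K_n)\to\Z_p$ with $n=pk+s$. A copy of $W_{pk}(s)$ spans all vertices. It is determined by a hub set $S$ with $|S|=s$ and a Hamiltonian cycle $H$ on $T=V\setminus S$, and its weight is $w(S,T)+w(H)$. Here $w(S,T)$, the sum over all $S$--$T$ edges, depends only on $S$. So the task is: find $S$ and a Hamiltonian cycle of $K[T]$ whose weight is $\equiv -w(S,T)$.

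The guiding model is the \emph{additive} labeling $w(uv)=f(u)+f(v)$. In that case every Hamiltonian cycle on $T$ has weight $2\sum_{T}f$, and the spokes contribute
\[
pk\sum_{S}f+s\sum_{T}f\equiv s\sum_T f \pmod p.
\]
The total is therefore $(s+2)\sum_T f\equiv 0$, and this is exactly where the hypothesis $p\mid s+2$ enters. The proof should show that either $w$ is close enough to additive for this computation to survive, or there is enough flexibility to hit every residue.

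Flexibility comes from the $2$-opt switch. For vertices $a,b,c,d$, define the defect
\[
\delta(a,b,c,d)=w(ac)+w(bd)-w(ab)-w(cd).
\]
A Hamiltonian cycle using $ab$ and $cd$ as gadget edges can be rerouted through $ac$ and $bd$, which changes its weight by $\delta$. Concretely, if $T$ contains $p-1$ vertex-disjoint quadruples with nonzero defect, I string them into one Hamiltonian cycle on $T$ (possible since $|T|=pk$ is large and $K[T]$ is complete). Each gadget independently offers two weights differing by a nonzero element. By the Cauchy--Davenport theorem ($p$ prime), the set of attainable cycle weights is all of $\Z_p$, so some cycle has the required weight $-w(S,T)$.

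The key dichotomy is a standard fact for odd $p$: if all $\delta$ vanish on a vertex set, then $w$ is additive there. So if no $p-1$ disjoint nonzero-defect quadruples exist, a maximal disjoint family covers at most $4(p-2)=F_0$ vertices. This exceptional set $X$ meets every nonzero-defect quadruple, so $w$ is additive on $V\setminus X$ with some potential $f$. The number of ways the edges from $X$ can look relative to the types $f(v)$ is about $p^{F_0}$. Since $k\ge K_0$ makes $n$ much larger than $(s+2F_0)p^{F_0}$, pigeonhole gives many vertices of $V\setminus X$ with identical profile towards $X$.

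I then choose $S$ (and the placement of $X$) to be harmless:
\begin{itemize}
\item either put $X$ inside $T$ and adjust each exceptional vertex via its two cycle neighbours chosen from a large homogeneous class, so that the additive computation $(s+2)\sum_T f\equiv0$ goes through up to corrections I can cancel;
\item or, if some quadruple meeting $X$ with nonzero defect can be combined with defects rerouted through the homogeneous classes, recover $p-1$ disjoint gadgets after all and fall back on Cauchy--Davenport.
\end{itemize}

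The main obstacle is this mixed case: where $X$ is nonempty but too small to supply $p-1$ gadgets. There one must check carefully that the exceptional vertices either generate new disjoint defects or can be absorbed into the additive identity. The size of $K_0$ is dictated by the pigeonhole needed here.
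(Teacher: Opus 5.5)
Your framework matches the paper's: the switch defect, Cauchy--Davenport applied to $p-1$ vertex-disjoint nonzero-defect gadgets, a maximal disjoint family $X$ with $|X|\le 4(p-2)$, additive structure of $w$ on $U=V\setminus X$, and pigeonhole on profiles towards $X$. But the proof is not finished. The step you call ``the main obstacle'' is exactly where the argument must close, and your two-branch plan for it is not an argument.

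Your first branch, as written, controls only the two cycle neighbours of each $x\in X$, which covers $2$ of the $s+2$ wheel edges at $x$. The other $s$ edges are the spokes $xh$, $h\in S$, and $\sum_{h\in S}w(xh)$ is arbitrary unless the hubs are also controlled. You never say where $S$ goes, and no mechanism is given for the ``corrections I can cancel.'' The second branch, rebuilding $p-1$ gadgets from quadruples meeting $X$, is unspecified and in fact unnecessary.

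The missing idea is to put the hub set inside the same homogeneous class as the rim neighbours.
\begin{enumerate}
\item Extend $f$ arbitrarily to $X$ and subtract $f(u)+f(v)$ from every edge. This changes no wheel weight, because rim degrees are $s+2\equiv 0$ and hub degrees are $pk\equiv 0$. Afterwards $w\equiv 0$ on all edges inside $U$.
\item Profiles are now just $(w(xu))_{x\in X}\in\Z_p^{|X|}$. Since $|U|\ge pk+s-F_0\ge (s+2F_0)p^{F_0}$, some class $T_0$ of equal profile has $|T_0|\ge s+2|X|$.
\item Take $S\subseteq T_0$ with $|S|=s$. Give each $x\in X$ two private rim neighbours $\ell_x,r_x\in T_0\setminus S$. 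Join the segments $\ell_x x r_x$ into a Hamilton cycle on $V\setminus S$ using the remaining vertices of $U$; no two vertices of $X$ are then adjacent.
\end{enumerate}
Every wheel edge at $x$ (two rim edges and $s$ spokes) now goes to $T_0$ and carries the same weight $c_x$, so it contributes $(s+2)c_x=0$. Every other edge lies inside $U$ and has weight $0$. So this wheel is zero-sum outright, and no mixed case remains. This is also exactly why $K_0$ carries the factor $s+2F_0$: $s$ hubs plus two neighbours per exceptional vertex.
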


For the specific modulus \(q = 3\), we can establish a sharper exact result that holds without the large-\(k\) assumption.

\begin{theorem}\label{thm:q3-cycles}
    For every $k\ge 2$, $R(C_{3k},\Z_3)=3k+2$.
\end{theorem}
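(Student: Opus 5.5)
Both inequalities are needed. The lower bound $R(C_{3k},\Z_3)\ge 3k+2$ is Theorem~\ref{thm:lower}(i) with $q=3$, so only the upper bound requires work. By Theorem~\ref{thm:reduction} it suffices to show $R(C_6,\Z_3)\le 3k+2$ for the relevant $k$. Since $k\ge 2$ gives $3k+2\ge 8$, the goal is to prove $R(C_6,\Z_3)\le 8$. This settles every $k\ge 2$ at once, including $k=2$, where the claim reads $R(C_6,\Z_3)=8$.

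The heart of the proof is therefore showing that every labeling $w\colon E(K_8)\to\Z_3$ contains a zero-sum $C_6$. I would use the insertion idea behind Theorem~\ref{thm:reduction}. Fix a $6$-vertex set $S$ and a Hamiltonian cycle on it. Changing the cycle by a local move, such as swapping two consecutive vertices or replacing one vertex by one of the two outside vertices, changes the weight by a controlled sum of labels. The plan has three steps.
\begin{enumerate}[label=(\arabic*)]
\item Consider a $C_6$ with vertices $v_1,\dots,v_6$ and two outside vertices $x,y$. Each vertex $x$ can be inserted in place of $v_i$. Each resulting weight differs from $w(C)$ by $w(xv_{i-1})+w(xv_{i+1})-w(v_{i-1}v_i)-w(v_iv_{i+1})$.
\item Apply the Erd\H{o}s--Ginzburg--Ziv / Cauchy--Davenport principle in $\Z_3$. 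If the achievable weights miss $0$, they lie in a set of size at most $2$. This forces strong constraints, and these constraints should propagate to give that many paths of length $2$ have equal weight.
\item Show the constraints force a very rigid structure. The expectation is that all labels are constant on large parts, up to a small exceptional set. In that case a $C_6$ can be found directly, because a monochromatic $C_6$ has weight $6c=0$.
\end{enumerate}

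A cleaner alternative for step (3) is to use the local structure directly. A $C_6$ on vertices $a,b,c,d,e,f$ and the cycle obtained by transposing $b,c$ differ by
$$w(ab)+w(cd)-w(ac)-w(bd).$$
If no zero-sum $C_6$ exists, these differences are constrained on every $4$-tuple. Hence the function $w(ab)+w(cd)-w(ac)-w(bd)$ lies in a restricted set for many quadruples. I would then try to show that $w(uv)=f(u)+f(v)+c$ for some $f\colon V\to\Z_3$. The cycle weight then becomes $2\sum_{v\in C}f(v)+6c=2\sum f(v)$. Among $8$ values of $f$, Erd\H{o}s--Ginzburg--Ziv (which needs $2\cdot 3-1=5$ values to find $3$ summing to $0$) applied twice, or a direct pigeonhole argument, yields $6$ vertices with $\sum f=0$. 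This gives the zero-sum $C_6$ and is exactly where $n=8$ is needed.

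The main obstacle is the rigidity step: proving that the absence of zero-sum $C_6$'s forces the potential form $w(uv)=f(u)+f(v)+c$, or handling the exceptions. I would first try to establish that every $K_4$ satisfies the "$2$-switch invariance" $w(ab)+w(cd)=w(ac)+w(bd)$, arguing by contradiction. A violation in one $K_4$, combined with the freedom of choosing the remaining two cycle vertices among the four others, should produce three cycles with pairwise distinct weights, one of which is zero. If this case analysis becomes unwieldy, I would fall back on a short computer-free exhaustive argument based on vertex degrees in each label class.
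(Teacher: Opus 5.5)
\textbf{There is a genuine gap: you never prove that every $\Z_3$-labeling of $K_8$ contains a zero-sum $C_6$.}

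Your outer framework matches the paper's and is correct. The lower bound comes from Theorem~\ref{thm:lower}(i), and Theorem~\ref{thm:reduction} reduces the upper bound to $R(C_6,\Z_3)\le 8$. Your endgame is also valid. If $w(uv)=f(u)+f(v)+c$ holds on all of $K_8$, a $6$-cycle has weight $2\sum f$. Two applications of Erd\H{o}s--Ginzburg--Ziv to the eight values $f(v)$ then give six vertices with $\sum f=0$.

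The missing part is the proof that a zero-sum-free labeling of $K_8$ must have this affine form, and that is the entire content of Lemma~\ref{lem:z3-c6-base}. Your steps (1)--(2) give no usable constraint. In $\Z_3$, ``the achievable weights miss $0$'' only says the weights are nonzero. To force a $0$ you need two \emph{independent} nonzero changes of the same cycle, and nothing in your sketch produces them. The EGZ insertion of Lemma~\ref{lem:insert} is also unavailable: it would have to start from a zero-sum $C_3$, which has fewer than $2q-1=5$ edges.

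The specific lemma you propose cannot be proved locally, because it is false on seven vertices. That lemma says every quadruple satisfies $w(ab)+w(cd)=w(ac)+w(bd)$, to be shown by a contradiction starting from one violating $K_4$. Take a $5$-set $B$ and two vertices $p,q$ with $w(BB)=0$, $w(B,\{p,q\})=1$ and $w(pq)=0$.
\begin{itemize}
\item Every $6$-cycle has weight $1$ or $2$: weight $2$ if it omits $p$ or $q$ or uses the edge $pq$, and weight $1$ otherwise.
\item Yet for $u,v\in B$ we have $w(pu)+w(qv)-w(pq)-w(uv)=2\neq 0$.
\end{itemize}
So one violation gives only the two values $S$ and $S+\Delta$. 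Any correct argument must use the eighth vertex essentially and must rule out this non-affine configuration explicitly.

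The paper does this in two stages.
\begin{itemize}
\item \emph{Classifying zero-sum-free $K_7$.} For each pair $x,y$, at least four of the five differences $w(xt)-w(yt)$ must agree. Otherwise the two independent switches in $xacybdx$ produce all of $\Z_3$.
\begin{itemize}
\item If every pair is regular, Lemma~\ref{lem:z3-affine} and a residue count give the affine structure with multiplicities $5$ and $2$.
\item A singular pair leads, by a hand analysis, to exactly the configuration above.
\end{itemize}
After normalising, every zero-sum-free $K_7$ therefore has $w(BB)=0$, $w(B,\{p,q\})=1$ and $w(pq)\in\{0,2\}$.
\item \emph{Adding the eighth vertex $x$.} Cycles through $x$ and all of $B$, together with cycles of the form $xiupvjx$, force $w(xu)=1$ for every $u\in B$. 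Then $xipjqkx$ is an all-$1$ cycle, which is zero-sum.
\end{itemize}
Without an argument of this kind, your proposal does not establish $R(C_6,\Z_3)\le 8$.
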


Extending our techniques, we also prove a corresponding result for wheel graphs modulo $3$.

\begin{theorem}\label{thm:q3-wheel}
    For every $k\ge 2$, $R(W_{3k},\Z_3)=3k+1$.
\end{theorem}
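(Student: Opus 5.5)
We need to prove $R(W_{3k},\Z_3)=3k+1$ for $k\ge2$, which splits into a lower bound and an upper bound.

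\textbf{Lower bound.} We must show $K_{3k}$ has a labeling with no zero-sum $W_{3k}$. A copy of $W_{3k}$ in $K_{3k}$ is impossible, since it needs $3k+1$ vertices. So $R(W_{3k},\Z_3)\ge 3k+1$ is trivial.

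\textbf{Upper bound.} Take $n=3k+1$ and a labeling $w$. A copy of $W_{3k}$ is determined by a hub $v$ and a Hamiltonian cycle $C$ on $V\setminus\{v\}$. Its weight is $s(v)+w(C)$, where $s(v)=\sum_{u\ne v}w(uv)$ is the full star sum at $v$, because the hub is adjacent to all other vertices. So it suffices to find a hub $v$ and a Hamiltonian cycle on $K_{3k}=K_n-v$ with weight $-s(v)$. In other words, we need a Hamiltonian cycle of prescribed residue in $K_{3k}$, for some choice of $v$.

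The key lemma I would prove is this: if $K_m$ with $m\ge 6$ (here $m=3k$, $k\ge2$) has a labeling that is not constant, then Hamiltonian cycles of all three residues exist. Otherwise, every Hamiltonian cycle has weight $m\cdot c=0$, since $3\mid m$.

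To prove the lemma, I would use local switches. Given a Hamiltonian cycle containing the path $\dots a\,b\,c\,d\dots$, swapping $b$ and $c$ changes the weight by $w(ac)+w(bd)-w(ab)-w(cd)$. A 2-opt move also changes the weight by a 4-term difference. The plan is as follows. If the labeling is nonconstant, there are edges of distinct labels sharing a vertex. From this one shows that the set $S$ of achievable residues contains two values differing by a nonzero $\delta$. One then shows that $S$ is closed under adding $\delta$, for instance by performing disjoint switches at two separate locations of the cycle, which is possible since $m\ge6$. Since $3$ is prime, this gives all of $\Z_3$. Performing switches in disjoint parts of the cycle to iterate $\delta$ is the step I expect to be the main obstacle. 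A cleaner route may be to invoke Theorem~\ref{thm:q3-cycles}-style insertion arguments, or Cauchy--Davenport on independent local choices: partition the cycle into segments where each segment admits two orderings with different weights, then combine them.

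\textbf{Remaining case.} Suppose every $K_n-v$ is monochromatic, with labeling $\equiv c_v$. Since $n\ge 7$, any two of the subgraphs $K_n-v$ share edges, so all the $c_v$ are equal and the whole $K_n$ is monochromatic with label $c$. Then $W_{3k}$ has $6k$ edges, so its weight is $6kc=0$, and we are done. Otherwise, pick $v$ such that $K_n-v$ is nonconstant; by the lemma there is a Hamiltonian cycle of weight $-s(v)$, which finishes the proof.

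If the lemma fails for some special nonconstant labelings, for example when all Hamiltonian cycles share a residue because of a structure like $w(xy)=f(x)+f(y)$, I would handle it separately. In that case $w(C)=2\sum_x f(x)$ is fixed. Then I would vary the hub $v$ instead and use the fact that $s(v)+w(C_v)$ depends on $v$. If this is still constant, I would compute it directly and show it vanishes or leads to a contradiction.
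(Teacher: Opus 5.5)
\textbf{There is a genuine gap.} Your key lemma is false, and not only for the affine labelings you flag at the end. Label a single edge $xy$ of $K_m$ by $1$ and every other edge by $0$. Then every Hamiltonian cycle has weight $0$ or $1$, never $2$.

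Your step ``$S$ is closed under adding $\delta$ via disjoint switches'' breaks exactly here. A switch defect $w(ac)+w(bd)-w(ab)-w(cd)$ is nonzero only if $\{x,y\}$ is one of the four pairs involved. So every nonzero switch contains both $x$ and $y$, and no two of them are vertex-disjoint.

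Consequently your hub step also fails. In $K_7$ (so $k=2$), put $w(xy)=w(vt_0)=1$ and all other labels $0$. Then $K_7-v$ is nonconstant and $s(v)=1$, yet every Hamiltonian cycle of $K_7-v$ has weight in $\{0,1\}$, so no wheel with hub $v$ is zero-sum. Only hubs outside $\{v,t_0,x,y\}$ work. So you cannot take an arbitrary hub with $K_n-v$ nonconstant; the hub must be chosen using the structure of $w$.

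Meanwhile, the special case you propose to treat separately is the trivial one. If $w(uv)=\lambda_u+\lambda_v+c$ on all of $K_{3k+1}$, every wheel is zero-sum, because all vertex degrees of $W_{3k}$ (namely $3$ and $3k$) and $e(W_{3k})=6k$ are divisible by $3$.

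\textbf{What is missing} is a structural analysis of \emph{near-affine} labelings, which is the actual content of the paper's proof. For a pair $x,y$ set $\rho(v)=w(xv)-w(yv)$. Suppose there were disjoint pairs $\{a,c\}$, $\{b,d\}$ with $\rho(a)\ne\rho(c)$ and $\rho(b)\ne\rho(d)$. Take a hub outside these six vertices and rim $xaPcybQdx$. The two independent interchanges give wheel weights $S,S+\alpha,S+\beta,S+\alpha+\beta$ with $\alpha,\beta\ne0$, and these must hit $0$ in $\Z_3$. Hence $\rho$ is constant except at no more than one vertex.

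If every pair is regular, Lemma~\ref{lem:z3-affine} puts you in the trivial affine case. Otherwise take a singular pair $xy$ with exceptional vertex $z$, and let $A=V\setminus\{x,y,z\}$. For a rim $xzuy\cdots x$ with hub $h\in A$, interchanging $z,u$ shifts the weight by $\delta\ne0$. Since neither wheel is zero-sum, the weight is \emph{pinned} to exactly $\delta$.

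Comparing these pinned identities across different hubs $h$ (through the star sums $D_h$) and different rim orderings forces every edge meeting $A$ to be affine. After subtracting the affine part, only $xy,xz,yz$ can be nonzero. A rim in which $x,y,z$ are pairwise nonadjacent then yields a zero-sum wheel. This argument genuinely exploits varying the hub, which your fixed-hub reduction gives up.
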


We briefly outline our proof strategy.
Theorem~\ref{thm:reduction} follows from an insertion lemma: given a zero-sum cycle of length at least $2q-1$ and at least $2q-1$ unused vertices, the classic Erd\H{o}s-Ginzburg-Ziv theorem guarantees that we can insert $q$ vertices such that the total change to the cycle's weight is zero.
We prove Theorem~\ref{thm:basecase} by applying Pikhurko's even-cycle Tur\'an bound to the largest color class and invoking the theorem of Katz et al. on zero-sum Ramsey numbers for bounded degree graphs.
The lower bounds in Theorem~\ref{thm:lower} arise from a specific cut labeling in which every long cycle crosses the cut a positive even number of times.
The proof of Theorem~\ref{thm:prime-case} relies on a different proof strategy.
By the Cauchy-Davenport theorem, having many independent non-zero switches would yield a zero-sum copy; thus, in any counterexample, all switch defects must be confined to a bounded number of vertices.
Then, the congruence condition $s+2\equiv0\pmod p$ guarantees that any sufficiently large set of the same type contains a zero-sum copy.
Finally, the exact results for cycles and wheels modulo $3$ require a more structural analysis of labelings that avoid zero-sum copies.
\medskip

The rest of the paper is organized as follows.
Section~2 contains the proofs of Theorem~\ref{thm:reduction} and Theorem~\ref{thm:basecase}.
The proof of Theorem~\ref{thm:lower} is presented in Section~3.
In Section~4, we complete the proof of Theorem~\ref{thm:prime-case}.
Sections~5 and~6 establish the exact results for cycles and wheels modulo 3, proving Theorems~\ref{thm:q3-cycles} and~\ref{thm:q3-wheel}, respectively.

\section{Preliminaries and Proofs of Theorem~\ref{thm:reduction} and Theorem~\ref{thm:basecase}}

In this section, we first introduce a key lemma, which we apply to prove Theorem~\ref{thm:reduction} and Theorem~\ref{thm:basecase}.
Our approach relies on the following version of the Erd\H{o}s--Ginzburg--Ziv theorem~\cite{EGZ1961}.

\begin{theorem}[Erd\H{o}s--Ginzburg--Ziv~{\cite{EGZ1961}}]\label{thm:egz}
    Every sequence of $2q-1$ elements of $\Z_q$ contains a subsequence of length $q$ whose sum is $0$ in $\Z_q$.
\end{theorem}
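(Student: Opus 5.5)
We prove the statement in two stages. First we handle $q$ prime with the Chevalley--Warning theorem. Then we pass to general $q$ by showing that the property is multiplicative: if it holds for moduli $a$ and $b$, it holds for $ab$. Induction on the number of prime factors of $q$ then finishes the proof. The case $q=1$ is trivial.

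\emph{Prime case.} Let $q=p$ be prime and let $a_1,\dots,a_{2p-1}$ be the sequence, regarded as elements of the field $\mathbb F_p$. In $2p-1$ variables over $\mathbb F_p$, consider the two polynomials
\[
f_1(x)=\sum_{i=1}^{2p-1} x_i^{p-1},\qquad f_2(x)=\sum_{i=1}^{2p-1} a_i x_i^{p-1}.
\]
Their degrees sum to $2(p-1)<2p-1$, the number of variables. Chevalley--Warning therefore says the number of common zeros is divisible by $p$. Since $x=0$ is a common zero, there is a nonzero common zero $x$.

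Let $S=\{i: x_i\neq 0\}$. By Fermat's little theorem, $x_i^{p-1}=1$ for $i\in S$ and $x_i^{p-1}=0$ otherwise. Hence $f_1(x)=0$ gives $|S|\equiv 0\pmod p$. Because $0<|S|\le 2p-1$, this forces $|S|=p$. Then $f_2(x)=0$ gives $\sum_{i\in S}a_i=0$, so $\{a_i\}_{i\in S}$ is the required subsequence. This step carries the real content; if Chevalley--Warning were not available, we would instead use the Cauchy--Davenport route, sorting the residues and building sumsets of pairs.

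\emph{Multiplicativity.} Assume the statement holds for $\Z_a$ and $\Z_b$, and take $2ab-1$ integers. As long as at least $2a-1$ unused terms remain, apply the statement for $\Z_a$ to pull out a disjoint block of $a$ terms whose sum is divisible by $a$.

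After extracting $2b-2$ blocks, exactly $2ab-1-(2b-2)a=2a-1$ terms remain. So we can extract one more block, giving $2b-1$ disjoint blocks $B_1,\dots,B_{2b-1}$ with block sums $a\,c_j$. Apply the statement for $\Z_b$ to $c_1,\dots,c_{2b-1}$ to obtain $b$ indices whose $c_j$ sum to a multiple of $b$. The union of the corresponding blocks has exactly $ab$ terms, and its sum is divisible by $ab$.

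This bookkeeping is routine. The main obstacle is the prime case, and the key point there is that the Fermat exponent turns both the length condition and the sum condition into polynomial equations of low enough degree for Chevalley--Warning to apply.
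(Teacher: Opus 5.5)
Your proof is correct and complete. The paper, however, gives no proof of this statement: it quotes Erd\H{o}s--Ginzburg--Ziv from the 1961 paper as a black box and uses it only inside the insertion lemma. So your argument is a self-contained replacement for that citation rather than a variant of a proof in the paper.

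\textbf{Prime case.} The Chevalley--Warning step is sound. The degree count $2(p-1)<2p-1$ is right, and the trivial zero forces a nontrivial common zero. Fermat then converts $f_1=0$ and $f_2=0$ into $|S|=p$ and $\sum_{i\in S}a_i=0$. The degenerate case where every $a_i=0$, so that $f_2\equiv 0$, is harmless, since the degree hypothesis is only an upper bound.

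\textbf{Multiplicativity.} Your count $2ab-1-(2b-2)a=2a-1$ correctly yields $2b-1$ disjoint blocks, so the reduction goes through. This matters here, because the paper explicitly does not assume $q$ prime and applies the theorem to arbitrary $q$. This reduction is the one from the original 1961 paper.

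\textbf{Comparison with the alternative route.} For the prime case you use Chevalley--Warning. The alternative you mention runs as follows: sort the residues; if $a_i=a_{i+p-1}$ for some $i$, then $p$ equal terms already sum to $0$. Otherwise, apply Cauchy--Davenport to the $p-1$ two-element sets $\{a_i,a_{i+p-1}\}$, which cover all of $\Z_p$, and hit $-a_{2p-1}$. That route uses only a tool the paper already states in Section~4, in exactly the form ``$p-1$ sets of size $2$ sum to all of $\Z_p$''. Your Chevalley--Warning argument is shorter and avoids the sorting and pairing case analysis, but it imports an extra algebraic theorem.
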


\begin{lemma}\label{lem:insert}
    Let $q\ge 2$.  Suppose that $K_n$ has an edge-labeling
    \(w\colon E(K_n)\to \Z_q\),
    which contains a zero-sum cycle $C_\ell$ with $w(C_\ell)=\sum_{e\in E(C_\ell)}w(e)\equiv 0\pmod q$ for some  $\ell\ge 2q-1$.
    If there are at least $2q-1$ vertices outside $C_\ell$, then $K_n$ contains a zero-sum cycle $C_{\ell+q}$.
\end{lemma}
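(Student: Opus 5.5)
The plan is to insert the new vertices one per edge. Each insertion changes the weight of the cycle by a fixed amount. By Theorem~\ref{thm:egz}, some $q$ of these changes sum to zero, and performing exactly those insertions gives the cycle $C_{\ell+q}$.

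Concretely, write the zero-sum cycle as $C_\ell=v_1v_2\cdots v_\ell v_1$ and its edges as $e_i=v_iv_{i+1}$, indices mod $\ell$. Since $\ell\ge 2q-1$, the edges $e_1,\dots,e_{2q-1}$ are pairwise distinct. Choose $2q-1$ distinct vertices $x_1,\dots,x_{2q-1}$ outside $C_\ell$, which the hypothesis allows. Pair $x_i$ with $e_i$, and define the insertion defect
\[
    d_i = w(v_ix_i)+w(x_iv_{i+1})-w(v_iv_{i+1})\in\Z_q .
\]
This is exactly the change in total weight when the edge $e_i$ is replaced by the path $v_ix_iv_{i+1}$.

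Next, apply Theorem~\ref{thm:egz} to the sequence $d_1,\dots,d_{2q-1}$. It gives an index set $I$ with $|I|=q$ and $\sum_{i\in I}d_i=0$. Let $C'$ be obtained from $C_\ell$ by replacing, for every $i\in I$, the edge $v_iv_{i+1}$ with the path $v_ix_iv_{i+1}$. We then check three things.
\begin{enumerate}[label=\textup{(\alph*)}]
    \item $C'$ is a cycle. The edges $e_i$ with $i\in I$ are distinct edges of $C_\ell$ and the $x_i$ are distinct vertices not on $C_\ell$. So each replacement subdivides a different edge, and the replacements do not interfere with one another.
    \item $C'$ has exactly $\ell+q$ vertices, since each of the $q$ insertions adds one vertex.
    \item $C'$ is zero-sum. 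The edge sets of the inserted paths and of the removed edges are disjoint for different $i$, so
    \[
        w(C')=w(C_\ell)+\sum_{i\in I}d_i=0+0=0 .
    \]
\end{enumerate}

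There is no real obstacle. The only points needing care are that the $2q-1$ host edges must be distinct, which is guaranteed by $\ell\ge 2q-1$, and that the defects add linearly, which holds because the modified edge sets are disjoint. This is why the hypothesis asks for $\ell\ge 2q-1$ rather than some weaker condition.
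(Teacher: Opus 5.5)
Your proposal is correct and follows the paper's proof essentially step for step: pair $2q-1$ distinct cycle edges with $2q-1$ distinct outside vertices, apply Erd\H{o}s--Ginzburg--Ziv to the insertion increments, and subdivide the $q$ selected edges. Your explicit checks that the subdivided graph is a cycle of length $\ell+q$ and that its weight changes additively spell out what the paper states briefly.
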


\begin{proof}
    Choose $2q-1$ distinct edges of $C_\ell$, say
    $e_i=a_i b_i$, for $i\in[2q-1]$,
    and choose $2q-1$ distinct vertices outside $C_\ell$, say $u_1,\dots,u_{2q-1}$.
    For each $i$, define the insertion increment
    \[
        d_i=w(a_i u_i)+w(u_i b_i)-w(a_i b_i)\in \Z_q.
    \]
    By Theorem~\ref{thm:egz}, there is an index set
    $I\subseteq [2q-1]$ with $|I|=q$,
    such that $\sum_{i\in I}d_i=0$ in $\Z_q$.

    For each $i\in I$, replace the edge $a_i b_i$ of $C_\ell$ by the two-edge path $a_iu_ib_i$.
    Subdividing distinct edges of a simple cycle by distinct new vertices produces another simple cycle.  Thus, we get a cycle of length $\ell+|I|=\ell+q$.
    Moreover, the edge-label sum is $w(C_\ell)+\sum_{i\in I}d_i=0+0=0$ in $\Z_q$.
    Hence the resulting cycle is a zero-sum $C_{\ell+q}$.
\end{proof}

Now, we are ready to prove Theorem~\ref{thm:reduction} and Theorem~\ref{thm:basecase}.

\begin{proof}[Proof of Theorem~\ref{thm:reduction}]
    We first claim that for every $t\ge 3$,
    \[
        R(C_{qt},\Z_q)\le \max\{R(C_{q(t-1)},\Z_q),qt+q-1\}.
    \]
    Indeed, let $n\ge \max\{R(C_{q(t-1)},\Z_q),qt+q-1\}$
    and label the edges of $K_n$ arbitrarily by elements of $\Z_q$.
    Since $n\ge R(C_{q(t-1)},\Z_q)$, there is a zero-sum cycle of length $q(t-1)$.
    Since $n\ge qt+q-1,$
    the number of vertices outside this cycle is at least
    \[
        n-q(t-1)
        \ge (qt+q-1)-q(t-1)
        =2q-1.
    \]
    Because $q(t-1)\ge 2q-1$ for every $t\ge 3$, by Lemma~\ref{lem:insert}, we obtain a zero-sum cycle of length $q(t-1)+q=qt$.
    Thus, we have $R(C_{qt},\Z_q)\le \max\{R(C_{q(t-1)},\Z_q),qt+q-1\}$ for every $t\ge 3$.

    Iterating this process from $t=3$ up to $t=k$ yields
    \[
        R(C_{qk},\Z_q)\le \max\{R(C_{2q},\Z_q),4q-1,\dots,qk+q-1\}
        =\max\{R(C_{2q},\Z_q),qk+q-1\},
    \]
    which completes the proof of Theorem~\ref{thm:reduction}.
\end{proof}

We denote by $\ex(n,F)$ the \emph{Tur\'an number}, given by
\[
    \ex(n,F)=\max\{|E(G)|\colon |V(G)|=n,\,G\text{ contains no copies of }F\}.
\]
The key ingredients for our proof of Theorem~\ref{thm:basecase} are the following two theorems.

\begin{theorem}[Pikhurko~{\cite{Pikhurko2012}}]\label{thm:pikhurko}
    For every integer $r\ge 2$ and every integer $n\ge 1$,
    \[
        \ex(n,C_{2r})\le (r-1)n^{1+1/r}+16(r-1)n.
    \]
\end{theorem}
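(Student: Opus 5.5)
Since Theorem~\ref{thm:pikhurko} is a cited external result, the plan below follows the standard Bondy--Simonovits breadth-first-search expansion scheme. The refinement needed to reach the constant $(r-1)$ in front of $n^{1+1/r}$ is the one I would attribute to Pikhurko's argument. Write $d=2e(G)/n$ and suppose, for contradiction, that $G$ is $C_{2r}$-free with $e(G)>(r-1)n^{1+1/r}+16(r-1)n$.

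First, I would pass to a well-behaved subgraph. Repeatedly deleting vertices of degree below $d/2$ yields a nonempty subgraph $H$ with minimum degree at least $d/2$. Taking a maximum cut then yields a bipartite subgraph $H'$ with minimum degree at least $d/4$. This costs only constant factors, and the additive term $16(r-1)n$ is what absorbs the losses. Then I would fix a root $v$ and form the BFS layers $L_0=\{v\},L_1,\dots,L_r$ in $H'$. Each layer $L_i$ is an independent set because $H'$ is bipartite, so every edge from $L_i$ goes to $L_{i-1}$ or $L_{i+1}$.

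The core step is an expansion lemma: for each $i<r$,
\[
|L_{i+1}|\ \ge\ \frac{\delta(H')}{r-1}\,|L_i|\ -\ O(|L_i|).
\]
To prove it, consider the bipartite graph $B_i$ between $L_i$ and $L_{i+1}$. If $B_i$ had average degree much larger than $r-1$ relative to the ratio $|L_{i+1}|/|L_i|$, I would extract from $B_i\cup T$, where $T$ is the BFS tree, a \emph{theta graph}: two vertices joined by internally disjoint paths in which some pair of paths has lengths that combine with tree paths to give total length exactly $2r$. The tool here is the Bondy--Simonovits/Verstra\"ete lemma. It states that a graph containing a cycle, in which vertices are $2$-colored (the colouring coming from the two subtrees of the BFS tree below the last common ancestor) so that the colour classes do not separate, contains paths of every length up to roughly its minimum degree between differently coloured vertices. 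This lets me close a cycle through the common ancestor in the BFS tree with length exactly $2r$, which is a contradiction. Iterating the expansion gives $|L_r|\ge (d/(c(r-1)))^r$ roughly, hence $n\ge |L_r|$ forces $d\le c'(r-1)n^{1/r}+O(r)$. With careful constants this matches the stated bound.

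The main obstacle is the expansion lemma with the sharp constant $r-1$, rather than the crude $100r$ of Bondy--Simonovits. The first thing I would try is to avoid the max-cut and min-degree losses by working with edge densities between consecutive layers directly. I would then apply the sharp form of the path-length lemma: a bipartite graph with a proper $2$-colouring obstruction and average degree exceeding $2(k-1)$ contains paths of all lengths up to $k$. I would track exactly how much each layer loses, and let the $16(r-1)n$ term pay for the low-degree vertices deleted along the way.
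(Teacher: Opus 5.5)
The paper does not prove this statement. It is quoted from Pikhurko's note and used only as a black box in the proof of Theorem~\ref{thm:basecase}. Judged on its own, your outline has a genuine gap, and it sits exactly where the content of the theorem lies: the leading coefficient $r-1$, which is what separates Pikhurko's bound from the Bondy--Simonovits and Verstra\"ete bounds.

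\textbf{The max-cut step loses a factor of $2$.} Your min-degree deletion is harmless: it gives $\delta(H)\ge e(G)/n$, and proving $\delta(H)\le (r-1)n^{1/r}+16(r-1)$ would finish. The max-cut step is not harmless, because it halves the minimum degree. Even granting your expansion inequality in $H'$ with ratio $\delta(H')/(r-1)\approx e(G)/(2(r-1)n)$, you only get $n\ge\bigl(e(G)/(2(r-1)n)-O(1)\bigr)^r$, that is, $e(G)\le 2(r-1)n^{1+1/r}+O(rn)$. A multiplicative loss in front of $n^{1+1/r}$ cannot be absorbed by the additive term $16(r-1)n$. That term can only pay for errors of size $O(r)|L_i|$ in each expansion step. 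So ``with careful constants this matches the stated bound'' is false for the route you describe.

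The loss is real even for bipartite graphs. Join $x$ to $V_1$ and let $G[V_1,V_2]$ be a disjoint union of copies of $K_{r-1,t}$ with the $(r-1)$-sides in $V_1$. There is no cycle longer than $2r-2$, yet $e(V_1,V_2)=(r-1)|V_2|$. So the per-layer coefficient cannot improve to compensate.

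\textbf{Your path lemma is also a factor $2$ too weak.} Closing a $C_{2r}$ through a last common ancestor at level $j$ needs a path of length $2r-2(i-j)$ in the layer graph, which can be as large as $2r-2$. A lemma giving paths only up to about the minimum degree (your ``average degree exceeding $2(k-1)$ gives paths of all lengths up to $k$'') forces an average-degree threshold near $4(r-1)$.

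\textbf{What is missing is a sharp expansion step in the original graph.} Use BFS layers $V_0,\dots,V_r$ of $H$ itself, with no bipartite reduction. For $1\le i\le r-1$ you need bounds of the form
\[
e(V_i,V_{i+1})\le (r-1)|V_{i+1}|+O(r)|V_i|,\qquad e(H[V_i])\le O(r)|V_i|.
\]
Then $\delta(H)|V_i|\le e(V_{i-1},V_i)+2e(H[V_i])+e(V_i,V_{i+1})$ gives $|V_{i+1}|\ge(\delta(H)/(r-1)-c)|V_i|$ for an absolute constant $c$. Hence $n\ge(\delta(H)/(r-1)-c)^r$ and $\delta(H)\le(r-1)(n^{1/r}+c)$, which gives the theorem when $c\le 16$.

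Only the coefficient of $|V_{i+1}|$ reaches the main term. So edges inside layers and the $|V_i|$-terms may be handled lossily, for example by Bondy--Simonovits-type arguments. The first bound, with coefficient exactly $r-1$, is the heart of the matter. It needs a sharp path lemma of Verstra\"ete type: in a suitably chosen (for example connected) subgraph of $H[V_i,V_{i+1}]$ with minimum degree at least $r$, one must find paths of every even length from $2$ to $2r-2$. These paths must join $V_i$-vertices lying in different branches of the BFS tree below their last common ancestor.

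Your proposal asserts the expansion inequality without proving either bound, and your final paragraph concedes that you do not yet see how to avoid the losses. As written, it yields at most a bound $C(r-1)n^{1+1/r}+O(rn)$ for some constant $C>1$, not the stated inequality.
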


\begin{theorem}[Katz, Lian, Malekshahian, and Shapiro~\cite{KLMS2025}]\label{thm:KLMS}
    Let $\Delta$ be a positive integer.
    Then there exists a constant $C = C(\Delta)$ such that for any graph $G$ with maximum degree $\Delta$ and any finite Abelian group $\Gamma_0$ such that $|\Gamma_0|$ divides $e(G)$, we have
    \[R(G,\Gamma_0) \le C \cdot v(G).\]
\end{theorem}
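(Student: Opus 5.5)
The statement immediately above is the cited result of Katz, Lian, Malekshahian and Shapiro. A proof has to work for an arbitrary finite Abelian group $\Gamma_0$, with the constant depending only on $\Delta$. The plan splits according to how large $|\Gamma_0|$ is compared with a threshold $M=M(\Delta)$, to be chosen at the end.

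\emph{Small groups, $|\Gamma_0|\le M$.} Regard the labeling as an $|\Gamma_0|$-colouring of $K_n$. The multicolour version of the Chv\'atal--R\"odl--Szemer\'edi--Trotter theorem, proved by the regularity lemma, gives a monochromatic copy of $G$ once $n\ge C_1(\Delta,M)\,v(G)$. A monochromatic copy of colour $g$ has weight $e(G)\,g$. This is zero because the order of $g$ divides $|\Gamma_0|$, and $|\Gamma_0|$ divides $e(G)$.

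\emph{Large groups, $|\Gamma_0|>M$.} The number of colours is now unbounded, so we must use the group structure. The key gain is
\[
v(G)\ge \frac{2e(G)}{\Delta}\ge \frac{2|\Gamma_0|}{\Delta},
\]
so $G$ has linearly many vertices compared with the group. First, fix a large set $S$ of vertices of $G$ that are pairwise far apart in $G$; since $\Delta$ is bounded, $|S|\ge v(G)/\Delta^{O(1)}$. Next, embed $G-S$ greedily into $K_n$ and keep a reservoir of unused vertices of size $\Theta(v(G))$. Each $x\in S$ can then be placed at any of many reservoir vertices $u$. Choosing $u$ versus $u'$ changes the total weight by a \emph{switch increment}
\[
d_x(u,u')=\sum_{y\in N_G(x)} \bigl(w(uy)-w(u'y)\bigr),
\]
which plays the role of the insertion increments in Lemma~\ref{lem:insert}. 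Because the vertices of $S$ are far apart, switches at different vertices of $S$ act on disjoint edge sets and so add up independently.

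It then remains to hit the target $-w(\text{initial copy})$ as a sum of chosen switches. I would use a Davenport/Olson-type subset-sum argument. Take $\Theta(v(G))\gg|\Gamma_0|$ independent switches whose increments are nonzero and do not all lie in a single proper subgroup. Then the set of attainable sums, built up one switch at a time as in Cauchy--Davenport/Kneser, grows until it is a coset containing the target, and hence the whole group.

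\emph{Main obstacle: the degenerate case.} For almost all reservoir vertices and almost all $x\in S$, the increments may all vanish or be trapped in a proper subgroup $H$. I would handle this by a stability argument. Vanishing increments force the labels $w(uy)$, summed over neighbourhoods, to be nearly independent of $u$ modulo $H$. That makes the labeling essentially determined by a vertex potential, i.e.\ $w(uv)\approx f(u)+f(v)$ modulo $H$. For such labelings the weight of a copy is $\sum_x \deg(x)f(\phi(x))$ plus an $H$-part. We can then choose the image set by pigeonholing the potential values into $|\Gamma_0/H|$ classes, and recurse into $H$, which is strictly smaller. Making this recursion terminate with a constant depending only on $\Delta$, and not on the length of the subgroup chain of $\Gamma_0$, is where I expect the real difficulty. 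First I would try to bound the number of recursion levels by $\log|\Gamma_0|$ and absorb the losses, using that each level costs only a constant-factor shrink of the reservoir, whose size starts at $\Theta(v(G))$.
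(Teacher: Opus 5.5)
The paper gives no proof of this statement. It is quoted from \cite{KLMS2025} and used only as a black box (with $\Delta=2$, $G=C_{2q}$, $\Gamma_0=\Z_q$) in the proof of Theorem~\ref{thm:basecase}. On its own merits, your small-group case is correct: multicolour Chv\'atal--R\"odl--Szemer\'edi--Trotter gives a monochromatic copy, and $e(G)g=0$. The large-group case, however, carries all the content, and it is not proved; several of its steps fail as stated.

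\emph{The count is wrong.} The bound $v(G)\ge 2|\Gamma_0|/\Delta$ only makes $v(G)$ comparable to $|\Gamma_0|$, and $|\Gamma_0|$ can be as large as $\Delta v(G)/2$. For cubic $G$ with $e(G)=|\Gamma_0|$, every independent $S$ has $|S|\le v(G)/2=|\Gamma_0|/3$. From $|S|$ binary options $\{0,d_x\}$, Cauchy--Davenport/Kneser guarantee only about $|S|+1$ attainable sums. This is sharp: if every $d_x$ equals one generator $g$, you get exactly $\{0,g,\dots,|S|g\}$. So ``nonzero and not all in a proper subgroup'' does not suffice. You would need each $x$ to carry $\Delta^{O(1)}$ distinct increments. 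Your degenerate case would then have to cover labelings whose increments take few values, not only those whose increments vanish or lie in a proper subgroup.

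\emph{The structure step is only asserted.} The passage from vanishing increments to a potential $w(uv)\approx f(u)+f(v)$ modulo $H$ has no argument behind it. Each $d_x(u,u')$ constrains label sums only over the fixed set $\phi(N_G(x))$ of one embedding. You would also have to vary how neighbourhoods are embedded (compare the four-point switch defects and Lemma~\ref{lem:transfer weight} in Section~4), and prove a robust, ``almost all'' version of such a lemma.

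\emph{The recursion cannot give a linear bound.} This is the most serious problem, so the difficulty you flag at the end cannot simply be absorbed.
\begin{itemize}
\item Pigeonholing all images into one class of $f$ modulo $H$ needs $n\ge|\Gamma_0/H|\,(v(G)-1)+1$. That is a loss of the factor $|\Gamma_0/H|$, which is not bounded in terms of $\Delta$.
\item Even a genuinely constant loss factor $c>1$ per level, compounded over up to $\log_2|\Gamma_0|$ levels of a subgroup chain, costs $|\Gamma_0|^{\log_2 c}$. Since $|\Gamma_0|$ can be of order $v(G)$, this yields at best $R(G,\Gamma_0)=O(v(G)^{1+\log_2 c})$, not $C(\Delta)\,v(G)$.
\end{itemize}
What is missing is a way to dispose of the degenerate, potential-like labelings at linear total cost in a single step, rather than by descending a subgroup chain. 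One possible route is a weighted Erd\H{o}s--Ginzburg--Ziv/Olson-type zero-sum lemma for $\sum_x\deg(x)f(\phi(x))$. You would also need a non-degenerate case built on large option sets instead of binary switches.
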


\begin{proof}[Proof of Theorem~\ref{thm:basecase}]
    It suffices to prove the two bounds
    \[
        R(C_{2q},\Z_q)\le D_0q
        \qquad\text{and}\qquad
        R(C_{2q},\Z_q)\le 35q^2
    \]
    for every $q\ge 3$.

    To see that $R(C_{2q},\Z_q)\le D_0q,$ let $G=C_{2q}$ and $\Gamma_0=\Z_q.$
    Then, by Theorem~\ref{thm:KLMS}, there exists a constant $C = C(2)$ such that $R(C_{2q},\Z_q)\le 2Cq.$
    Setting $D_0=\lceil 2C\rceil,$ we obtain $R(C_{2q},\Z_q)\le D_0q$ for every $q\ge 3.$
    It remains to prove that $R(C_{2q},\Z_q)\le 35q^2$ for every $q\ge 3.$

    Let $q\ge 3$ and $N=35q^2$.
    Consider an arbitrary labeling $w\colon E(K_N)\to \Z_q$.
    Regard the labels as $q$ colors.
    By the Pigeonhole Principle, there exists a color containing at least $\frac1q\binom N2$ edges.
    If this color contains a $C_{2q}$, then that cycle is zero-sum, since all its edges have the same label $a\in\Z_q$ and
    $2q\cdot a=0$ in $\Z_q$.
    Thus it is enough to prove that $\ex(N,C_{2q})<
        \binom N2/q$.
    By Theorem~\ref{thm:pikhurko}, it suffices to show that
    \[
        (q-1)N^{1+1/q}+16(q-1)N<
        \frac1q\binom N2.
    \]
    Dividing both sides by $(q-1)N$, this becomes
    \[
        N^{1/q}<\frac{N-1}{2q(q-1)}-16.
    \]
    Substituting $N=35q^2$, the right-hand side equals
    \[
        F_q\coloneqq\frac{35q^2-1}{2q(q-1)}-16
        =\frac32+\frac{35}{2(q-1)}-\frac{1}{2q(q-1)}.
    \]
    One can verify that $(35q^2)^{1/q}<F_q$ holds for all $q\ge 3$ (see Appendix~A for details).
    Hence $R(C_{2q},\Z_q)\le 35q^2$ for every $q\ge3$.
    Combining this with $R(C_{2q},\Z_q)\le D_0q$ gives
    $R(C_{2q},\Z_q)\le \min\{35q^2,D_0q\}$.
    The stated bound for $R(C_{qk},\Z_q)$ now follows from Theorem~\ref{thm:reduction}.
    This completes the proof of Theorem~\ref{thm:basecase}.
\end{proof}

\begin{remark}
    In fact, by using Olson's generalization~\cite{Olson1976} of Theorem~\ref{thm:egz} to arbitrary finite Abelian groups, an analogous argument shows that Theorems~\ref{thm:reduction} and~\ref{thm:basecase} hold for any finite Abelian group of order $q.$
    For wheel graphs, a result analogous to Theorem~\ref{thm:reduction} can be derived, and an upper bound of the base case $R(W_{2q},\Z_q)$ can also be established using the results in~\cite{Shapiro2026}.
\end{remark}

\section{Lower Bounds: Proof of Theorem~\ref{thm:lower}}

\begin{proposition}\label{prop:divisor-lower}
    Let $q\ge 2$, $k\ge 1$, and let $d\mid q$.
    Define
    \[
        h=
        \begin{cases}
            d,   & d\text{ is odd},  \\
            d/2, & d\text{ is even}.
        \end{cases}
    \]
    If $h\ge 2$, then
    \[
        R(C_{qk},\Z_q)\ge qk+h-1.
    \]
\end{proposition}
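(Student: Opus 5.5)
We need a labeling of $K_{qk+h-2}$ with no zero-sum $C_{qk}$. Following the outline in the introduction, we use a cut labeling. Split the vertex set into a part $A$ with $|A|=h-1$ and a part $B$ with $|B|=qk-1$. Give every edge inside $A$ or inside $B$ label $0$, and every edge between $A$ and $B$ the label $c=q/d\in\Z_q$, which has order exactly $d$.

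Let $C$ be any copy of $C_{qk}$. It has $qk$ vertices but $|B|=qk-1$, so $C$ uses at least one vertex of $A$. It also has at most $|A|=h-1$ vertices of $A$. Since $h-1<qk$, it uses vertices of $B$ as well. Traversing the cycle, the number $m$ of $A$–$B$ edges is even and positive. Each maximal run of $A$-vertices on $C$ contributes exactly two crossing edges, and there are at most $|V(C)\cap A|\le h-1$ such runs. Hence $2\le m\le 2(h-1)$ with $m$ even, so we may write $m=2j$ with $1\le j\le h-1$.

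The weight of $C$ is $w(C)=2jc$, and this is zero in $\Z_q$ exactly when $d\mid 2j$.

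\begin{itemize}[label=--]
\item If $d$ is odd, then $h=d$ and $d\mid 2j$ forces $d\mid j$. This is impossible because $1\le j\le d-1$.
\item If $d$ is even, then $h=d/2$ and $d\mid 2j$ forces $d/2\mid j$. This is impossible because $1\le j\le d/2-1$.
\end{itemize}

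So no $C_{qk}$ is zero-sum, and therefore $R(C_{qk},\Z_q)\ge qk+h-1$. The hypothesis $h\ge2$ is what makes $A$ nonempty. When $h=1$ the set $A$ is empty, the construction is trivial, and the bound $qk-1$ is vacuous anyway.

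The only delicate step is the crossing count: the argument needs both $m$ even (so that $w(C)=2jc$) and $m\le 2|A|$ (so that $j\le h-1$). Both follow from the run decomposition above, since every maximal run of $A$-vertices on the cycle is flanked by exactly two crossing edges.
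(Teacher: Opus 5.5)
Your proof is correct and is essentially the paper's proof: the same cut labeling with $|A|=h-1$, $|B|=qk-1$ and label $q/d$ on crossing edges, the same bound $1\le j\le h-1$ on half the number of crossings via maximal runs of $A$-vertices, and the same parity case split on $d$. One minor slip in your aside: for $h=1$ the bound reads $R(C_{qk},\Z_q)\ge qk$ (it is the host graph $K_{qk-1}$ that has $qk-1$ vertices), but this case is excluded by hypothesis and does not affect the argument.
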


\begin{proof}
    Let $m=qk$.
    We construct a $\Z_q$-labeling of the edges of $K_{m+h-2}$ that contains no zero-sum $C_m$.
    Partition the vertex set as $V=A\cup B$ with $|A|=h-1$ and $|B|=m-1$.
    Assign the edges weights $w(e)$ as follows:
    \[
        w(e)=
        \begin{cases}
            q/d, & \text{if } e\text{ crosses the cut }(A,B),              \\
            0,   & \text{if } e\text{ lies entirely within }A\text{ or }B.
        \end{cases}
    \]
    Note that the element $q/d\in\Z_q$ has additive order $d$.

    Let $C$ be an arbitrary $m$-cycle.
    Since $|B|=m-1<m$, the cycle must use at least one vertex of $A$.
    Similarly, since $|A|=h-1<m$, it also uses at least one vertex of $B$.
    Therefore, $C$ must cross the cut $(A,B)$ a positive even number of times, say $2s$.
    Hence, the total weight of $C$ is
    \[
        w(C)=2s\cdot \frac qd\quad\text{in }\Z_q.
    \]

    Suppose that $C$ uses $t$ vertices of $A$, then each connected component of $C\cap A$ contributes two crossings, and hence $1\le s\le t\le |A|=h-1$.
    If $C$ is a zero-sum cycle in $\Z_q$, then $d\mid 2s$.
    We now consider the parity of $d$.
    If $d$ is odd, then $d\mid 2s$ implies $d\mid s$.
    However, we know that $1\le s\le h-1=d-1$, which yields a contradiction.
    If $d$ is even, by the assumption that $d=2h$,
    $d\mid 2s$ is equivalent to $h\mid s$, which again is a contradiction to $1\le s\le h-1$.
    Thus, the graph contains no zero-sum $m$-cycle, which implies that $R(C_{qk},\Z_q)\ge m+h-1=qk+h-1$.
\end{proof}

Now, we complete the proof of Theorem~\ref{thm:lower}.

\begin{proof}[Proof of Theorem~\ref{thm:lower}]
    If $q$ is odd, we apply Proposition~\ref{prop:divisor-lower} with $d=q$.
    Then, $h=q$, and thus
    \[
        R(C_{qk},\Z_q)\ge qk+q-1.
    \]
    If $q$ is even and $q\ge 4$, we apply Proposition~\ref{prop:divisor-lower} again with $d=q$.
    Then, $h=\frac{q}{2}$, and thus
    \[
        R(C_{qk},\Z_q)\ge qk+\frac q2-1,
    \]
    completing the proof of the lower bounds.
\end{proof}

\section{Proof of Theorem~\ref{thm:prime-case}}
In this section, we prove Theorem~\ref{thm:prime-case}.  We assume throughout that all labels are taken from the field $\Z_p$, where $p$ is an odd prime, and that $s\ge1$ is a fixed integer satisfying the condition
\begin{equation}\label{eq:congruence-condition}
    s+2\equiv0\pmod p.
\end{equation}

For $k\ge1$, recall that $W_{pk}(s)=C_{pk}+\overline K_s$.
The $pk$ vertices of the cycle are called \emph{rim vertices}, while the $s$ vertices of $\overline K_s$ are called \emph{cone vertices}.
In $W_{pk}(s)$, every rim vertex has degree $s+2$, every cone vertex has degree $pk$, and the total number of edges is $|E(W_{pk}(s))|=(s+1)pk$.

As usual, we will use the sum-set notation:
\[A_1+\cdots+A_n=\{a_1+\cdots+a_n\ |\ (a_1,...,a_n)\in A_1\times\cdots\times A_n\}.\]

We shall use the following standard form of the Cauchy-Davenport theorem.

\begin{theorem}[Cauchy-Davenport~{\cite{Cauchy1813,Davenport1935}}]\label{thm:cauchy-davenport}
    If $A,B$ are nonempty subsets of $\Z_p$, then $|A+B|\ge \min\{p,|A|+|B|-1\}$.
\end{theorem}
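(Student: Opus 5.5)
The statement just above is the classical Cauchy–Davenport theorem, which the paper only cites. To prove it, I would use Davenport's original induction on $|B|$, with the Dyson $e$-transform as the main tool. The case $|B|=1$ is immediate, since then $A+B$ is a translate of $A$ and $|A+B|=|A|=|A|+|B|-1$. We may also assume $A\ne\Z_p$, since otherwise $A+B=\Z_p$ and there is nothing to prove. Finally, because translating $B$ translates $A+B$ and changes no cardinalities, we may assume $0\in B$.

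Now let $|B|\ge2$ and proceed in three steps.

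\emph{Step 1: find a suitable $e$.} Suppose, for contradiction, that $e+B\subseteq A$ for every $e\in A$. Then $A+B\subseteq A$, so $A$ is closed under adding every element of $B$. Hence $A$ is closed under adding every element of the subgroup generated by $B$. Since $|B|\ge 2$, $B$ contains a nonzero element, and because $p$ is prime, this element generates all of $\Z_p$. This would force $A=\Z_p$, which we excluded. So we may choose $e\in A$ with $e+B\not\subseteq A$.

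\emph{Step 2: transform the pair.} Define
\[
A'=A\cup(e+B),\qquad B'=B\cap(A-e).
\]
The map $x\mapsto x-e$ sends $A'\setminus A$ bijectively onto $B\setminus B'$, so $|A'|+|B'|=|A|+|B|$. The set $B'$ is nonempty because $e\in A$ gives $0\in B'$. Also $|B'|<|B|$ by the choice of $e$.

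\emph{Step 3: compare sumsets and induct.} I claim $A'+B'\subseteq A+B$. Sums $a+b'$ with $a\in A$ and $b'\in B'\subseteq B$ clearly lie in $A+B$. For the other sums, take $b\in B$ and $b'\in B'$, and rewrite
\[
(e+b)+b'=(e+b')+b .
\]
Here $e+b'\in A$ by the definition of $B'$, so this sum also lies in $A+B$. Applying the induction hypothesis to the pair $(A',B')$ gives
\[
|A+B|\ge|A'+B'|\ge\min\{p,\ |A'|+|B'|-1\}=\min\{p,\ |A|+|B|-1\},
\]
which completes the induction.

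The only step where primality of $p$ is used, and so the only genuine obstacle, is Step 1: ruling out that $A$ is stable under translation by $B$. This requires that the nonzero element of $B$ generates $\Z_p$, which holds precisely because $\Z_p$ has no nontrivial proper subgroups.

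An alternative route is the Combinatorial Nullstellensatz. Assume $|A|+|B|-1\le p$ and suppose $|A+B|\le|A|+|B|-2$. Choose $C\supseteq A+B$ with $|C|=|A|+|B|-2$, and apply the Nullstellensatz to
\[
f(x,y)=\prod_{c\in C}(x+y-c).
\]
The coefficient of $x^{|A|-1}y^{|B|-1}$ is $\binom{|A|+|B|-2}{|A|-1}$, which is nonzero modulo $p$ because $|A|+|B|-2<p$. So $f$ has a non-root in $A\times B$. But every $a+b$ with $a\in A$, $b\in B$ lies in $C$, so $f$ vanishes on $A\times B$, a contradiction. I would present the elementary induction as the main proof.
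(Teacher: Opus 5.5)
Your proof is correct and complete. The $e$-transform step is set up properly:
\begin{itemize}
\item $0\in B'$ because $0\in B$ and $e\in A$;
\item $|B'|<|B|$ by the choice of $e$;
\item the map $x\mapsto x-e$ carries $(e+B)\setminus A$ bijectively onto $B\setminus B'$, so $|A'|+|B'|=|A|+|B|$;
\item the identity $(e+b)+b'=(e+b')+b$ gives $A'+B'\subseteq A+B$.
\end{itemize}
Primality enters exactly where you locate it, in ruling out $A+B\subseteq A$. Because the induction carries the $\min\{p,\cdot\}$ through and disposes of $A=\Z_p$ directly, it covers both regimes $|A|+|B|-1\le p$ and $|A|+|B|-1>p$ at once.

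The paper gives no proof of this statement; it only cites Cauchy and Davenport. So there is no competing route to compare, and your write-up simply makes the argument self-contained.

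Two small remarks. First, what the paper actually uses in Section~4 is the iterated form $|A_1+\cdots+A_n|\ge\min\{p,\sum_i|A_i|-n+1\}$, in particular that $p-1$ two-element subsets sum to all of $\Z_p$. This follows from your two-set statement by a routine induction applied to $(A_1+\cdots+A_{n-1})+A_n$. Once a partial sum is all of $\Z_p$ it stays so, and otherwise the bounds add up.

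Second, your Nullstellensatz alternative only treats the case $|A|+|B|-1\le p$. For $|A|+|B|\ge p+2$ you still need $A+B=\Z_p$. This follows because $A$ and $x-B$ have total size exceeding $p$ and hence intersect for every $x\in\Z_p$; alternatively, shrink $A$ until the first case applies. Since you present the $e$-transform induction as the main proof, this is only a gap in the side sketch.
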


A repeated application of Theorem~\ref{thm:cauchy-davenport} gives the following consequence.
If $A_1, A_2, \cdots, A_n$ are nonempty
subsets of $\Z_p$, then
$|A_1 + A_2 +\cdots+ A_n| \ge \min(p, |A_1| + |A_2| +\cdots+|A_n|-n+1)$.
Moreover, if $A_1, A_2, \cdots, A_{p-1}$ are
subsets of $\Z_p$ of size $2$, then $A_1 + A_2 +\cdots+ A_{p-1} = \Z_p$.

We also need the following useful lemma.

\begin{lemma}\label{lem:transfer weight}
    Let $U$ be a set with $|U|\ge4$, and let $w \colon E(K_U)\to\Z_p$ be an edge-labeling such that for all distinct $a,b,c,d\in U$,
    \begin{equation}\label{eq:four-point-relation}
        w(ac)+w(bd)=w(ab)+w(cd).
    \end{equation}
    Then, there exist $\lambda_u\in\Z_p$ for each $u\in U$, and a constant $c\in\Z_p$ such that $w(uv)=\lambda_u+\lambda_v+c$ for all distinct $u,v\in U$.
\end{lemma}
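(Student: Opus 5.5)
We plan to define the $\lambda_u$ explicitly from a fixed base triple and then check the formula using the four-point relation \eqref{eq:four-point-relation}. One point of care: $p$ is odd, so $2$ is invertible in $\Z_p$, and we will freely divide by $2$.

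\textbf{Step 1: an intrinsic potential.} For distinct $u,x,y\in U$, set
\[
\phi_u(x,y)=w(ux)+w(uy)-w(xy).
\]
We first show that $\phi_u(x,y)$ does not depend on the choice of $x,y$.
\begin{itemize}[label=--]
\item Replacing $y$ by $y'$ (all four vertices $u,x,y,y'$ distinct) changes $\phi_u$ by
\[
w(uy)-w(uy')-w(xy)+w(xy').
\]
\item This difference vanishes by \eqref{eq:four-point-relation} applied with $(a,b,c,d)=(u,x,y,y')$ after relabeling. Concretely, the relation is $w(ac)+w(bd)=w(ab)+w(cd)$, and we want $w(uy)+w(xy')=w(uy')+w(xy)$.
\item Taking $a=u$, $c=y$, $b=x$, $d=y'$ gives $w(uy)+w(xy')=w(ux)+w(yy')$. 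The same relation with $y,y'$ swapped gives $w(uy')+w(xy)=w(ux)+w(y'y)$. Subtracting the two yields exactly the required identity.
\item Since $|U|\ge4$, any two pairs $\{x,y\}$ and $\{x',y'\}$ avoiding $u$ can be connected by changing one element at a time. When the pairs are disjoint, we pass through $\{x,y'\}$, which is fine.
\end{itemize}
So $\phi_u$ is well defined. Call it $\phi_u$.

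\textbf{Step 2: the key identity.} We claim that for distinct $u,v$,
\[
\phi_u+\phi_v=2w(uv)+\kappa
\]
for a constant $\kappa$.
\begin{itemize}[label=--]
\item Pick $x\notin\{u,v\}$, and compute $\phi_u$ using the pair $(v,x)$ and $\phi_v$ using the pair $(u,x)$:
\[
\phi_u+\phi_v=w(uv)+w(ux)-w(vx)+w(vu)+w(vx)-w(ux)=2w(uv).
\]
\item So in fact $\kappa=0$.
\end{itemize}

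\textbf{Step 3: conclusion.} Set $\lambda_u=\phi_u/2$ and $c=0$. Step 2 then gives $w(uv)=\lambda_u+\lambda_v+c$ for all distinct $u,v\in U$, as claimed.

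\textbf{Main obstacle.} The only delicate point is Step 1, namely matching the precise index pattern of \eqref{eq:four-point-relation} to the needed swap identity and verifying connectivity of the pair-exchanges with $|U|\ge4$. The argument also uses that $p$ is odd when dividing by $2$. If one wished to avoid division, the alternative is to fix a base vertex $z$ and define $\lambda_u$ via $w(uz)$ minus a correction. That route is messier, so we prefer the halving argument.
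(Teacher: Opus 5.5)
Your proof is correct, and it takes a different route from the paper.

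\textbf{Your route.} You build a base-point-free potential $\phi_u=w(ux)+w(uy)-w(xy)$, which in any representation equals $2\lambda_u+c$. You show via the four-point relation that it does not depend on the pair $\{x,y\}$, observe that $\phi_u+\phi_v=2w(uv)$, and then halve.

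\textbf{The paper's route.} The paper instead fixes two base vertices $a,b$. Applying the relation to $(x,a,y,b)$ and $(x,b,y,a)$ shows that $w(xa)-w(xb)=\delta$ is constant on $U\setminus\{a,b\}$. It then writes down the representation directly: $\lambda_x=w(xa)$, $\lambda_a=\delta+w(ab)$, $\lambda_b=w(ab)$ and $c=-\delta-w(ab)$.

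\textbf{What each buys.} The paper's construction uses no division, so it works verbatim over any abelian group, including $\Z_2$. There the constant $c$ genuinely cannot be dropped: the all-ones labeling of $K_4$ over $\Z_2$ satisfies the relation, but any triangle rules out a representation with $c=0$. Your argument needs $2$ to be invertible. That is legitimate here, since Section~4 assumes $p$ is an odd prime. In exchange, your construction involves no choices and yields the sharper normalization $c=0$.

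\textbf{A minor simplification.} The swap identity $w(uy)+w(xy')=w(uy')+w(xy)$ in your Step~1 is already a single instance of the four-point relation, namely with $(a,b,c,d)=(u,y',y,x)$. So subtracting two instances is unnecessary, though harmless.
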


\begin{proof}
    Fix two distinct vertices $a,b\in U$.  For distinct $x,y\in U\setminus\{a,b\}$, applying~\eqref{eq:four-point-relation} to the sequences $(x,a,y,b)$ and $(x,b,y,a)$ yields
    \[
        w(xy)+w(ab)=w(xa)+w(yb)
        \quad\text{and}\quad
        w(xy)+w(ab)=w(xb)+w(ya).
    \]
    Hence, $w(xa)-w(xb)=w(ya)-w(yb)$.
    Thus, there is a constant $\delta\in\Z_p$ such that $w(xa)-w(xb)=\delta$ for every $x\in U\setminus\{a,b\}$.
    Set $\lambda_x=w(xa)$ for $x\in U\setminus\{a,b\}$,
    $c=-\delta-w(ab)$, $\lambda_a=\delta+w(ab)$, and $\lambda_b=w(ab)$.
    These definitions immediately yield $w(xa)=\lambda_x+\lambda_a+c$, $w(xb)=\lambda_x+\lambda_b+c$, and $w(ab)=\lambda_a+\lambda_b+c$.  Finally, the first displayed equality implies that  $w(xy)=\lambda_x+\lambda_y+c$ for distinct $x,y\in U\setminus\{a,b\}$.  This completes the proof.
\end{proof}

We now prove Theorem~\ref{thm:prime-case}.

\begin{proof}[Proof of Theorem~\ref{thm:prime-case}]
    Recall that $F_0=4(p-2)$ and $ K_0(p,s)=\max\left\{4,\left\lceil\frac{(s+2F_0)p^{F_0}+F_0-s}{p}\right\rceil\right\}.$
    The lower bound holds trivially, since $W_{pk}(s)$ has $pk+s$ vertices.
    It remains to prove that every labeling $w \colon E(K_{pk+s})\to\Z_p$ contains a zero-sum copy of $W_{pk}(s)$ whenever $k\ge K_0(p,s)$.

    Suppose, on the contrary, that such a labeling $w$ contains no zero-sum $W_{pk}(s)$.
    We first explain why the congruence condition is useful.
    If $w(uv)=\lambda_u+\lambda_v+c$ holds for the labeling on the vertices of $K_{pk+s}$, then for every copy $W$ of $W_{pk}(s)$,
    \begin{equation}\label{eq:zero-vanishes}
        \sum_{uv\in E(W)}w(uv)=\sum_{v\in V(W)}d_W(v)\lambda_v+c|E(W)|=0.
    \end{equation}
    Indeed, the degrees in $W$ are $s+2$ for rim vertices and $pk$ for cone vertices, while $|E(W)|=(s+1)pk$; all three quantities are $0$ in $\Z_p$ by~\eqref{eq:congruence-condition}.

    For four distinct vertices $a,b,c,d$, we define the switch defect
    \begin{equation}\label{eq:switch-defect}
        \Delta(a,b;c,d)=w(ac)+w(bd)-w(ab)-w(cd).
    \end{equation}
    We can view this as the change in the weight of a cycle segment when the path $abcd$ is replaced by $acbd$, since the middle edge $bc$ is used in both paths.

    We claim that there do not exist $p-1$ pairwise vertex-disjoint ordered quadruples $(a_i,b_i,c_i,d_i)$ with $\Delta(a_i,b_i;c_i,d_i)\ne0$.
    Suppose to the contrary that such quadruples exist, and put
    $\Delta_i=\Delta(a_i,b_i;c_i,d_i)\ne0$ for $i=1,\ldots,p-1$.
    Let $A_i=\{0,\Delta_i\}$ for $i=1,\ldots,p-1$.
    Clearly, $|A_i|=2.$
    By the generalized Cauchy-Davenport theorem, we have
    $A_1+\cdots+A_{p-1}=\Z_p.$
    Since $k\ge4$, we have $pk\ge4(p-1)$.  We may therefore choose the $s$ cone vertices outside these quadruples and build a rim cycle of length $pk$ containing each path $a_ib_ic_id_i$ as a consecutive segment.
    Independently replacing any chosen segment $a_ib_ic_id_i$ with $a_ic_ib_id_i$ produces a copy of $W_{pk}(s)$ whose weight belongs to the set $S+A_1+\cdots+A_{p-1}$, where $S$ is the weight of the original copy.
    Since $A_1+\cdots+A_{p-1}=\Z_p,$ there must exist a $W_{pk}(s)$ with weight $0$, which is a contradiction.

    Choose a maximal family of pairwise vertex-disjoint ordered quadruples with non-zero switch defect, and let $F$ be the set of all vertices occurring in this family.
    The preceding paragraph implies that
    \begin{equation}\label{eq:D-bound}
        |F|\le F_0=4(p-2).
    \end{equation}
    Let $U=V(K_{pk+s})\setminus F$.
    By the maximality of $F$, every switch defect involving only vertices in $U$ must be zero.
    Thus,~\eqref{eq:four-point-relation} holds on $U$.
    Since $k\ge K_0(p,s)$, the set $U$ contains at least four vertices, and Lemma~\ref{lem:transfer weight} yields the representation $w(uv)=\lambda_u+\lambda_v+c$ for all $u,v\in U$.

    We extend the values $\lambda_u$ arbitrarily to vertices in $F$ and subtract the labeling $a(uv)=\lambda_u+\lambda_v+c$ from $w$. By~\eqref{eq:zero-vanishes}, this operation preserves the weight of every copy of $W_{pk}(s)$.
    Therefore, we may assume that
    \begin{equation}\label{eq:U-zero}
        w(uv)=0\quad\text{for all distinct }u,v\in U.
    \end{equation}

    Let $f=|F|$.
    For each $u\in U$, define its type with respect to $F$ by $\tau(u)=(w(xu))_{x\in F}\in\Z_p^f$.
    There are at most $p^f$ possible types. From the assumption that~$K_0(p,s)\ge \left\lceil\frac{(s+2F_0)p^{F_0}+F_0-s}{p}\right\rceil$ and~\eqref{eq:D-bound}, we obtain that
    \[
        |U|\ge pk+s-F_0\ge (s+2F_0)p^{F_0}\ge (s+2f)p^f.
    \]
    Hence, there is some type class $T\subseteq U$ satisfying $|T|\ge s+2f$.

    Let $H\subseteq T$ be a set of size $s$, which will form the set of cone vertices.
    Because $|T\setminus H|\ge2f,$ we can assign to each $x\in F$ a pair of vertices $\ell_x,r_x\in T\setminus H$ such that all $2f$ vertices are distinct.
    We now construct a Hamilton cycle on $V(K_{pk+s})\setminus H$ where each $x\in F$ appears as part of the segment $\ell_xxr_x$.
    Since the host graph is complete, we can easily achieve this by connecting these segments and arbitrarily inserting any remaining vertices of $U\setminus H$ between them.
    Let $C$ be the resulting rim cycle.

    We claim that the configuration $C+\overline K_s$ with cone set $H$ has a total weight zero.
    Indeed, by~\eqref{eq:U-zero}, any edge with both endpoints in $U$ has weight $0$, and by definition, there are no edges among cone vertices.
    Now, fix a vertex $x\in F$.
    Since all vertices of $T$ share the same type, there exists a constant $c_x\in\Z_p$ such that $w(xu)=c_x$ for every $u\in T$.
    The total weight contribution by the edges incident to $x$ is $2c_x$ from the two rim edges $x\ell_x$ and $xr_x$, and $sc_x$ from the edges joining $x$ to the cone set $H$.
    According to~\eqref{eq:congruence-condition}, we have
    \[
        2c_x+sc_x=(s+2)c_x=0.
    \]
    Summing over all $x\in F$ results in a total weight of $0$.
    This contradiction proves the desired upper bound.
    Combined with the vertex-count lower bound, we conclude that
    $R(W_{pk}(s),\Z_p)=pk+s$ for all $k\ge K_0(p,s)$, completing the proof of Theorem~\ref{thm:prime-case}.
\end{proof}

\section{The Cycle Case Modulo Three: Proof of Theorem~\ref{thm:q3-cycles}}

Before proving Theorem~\ref{thm:q3-cycles}, we first note that the value of $R(C_{3},\Z_3)=11$ was determined by Caro in~\cite{Caro1992Complete}.
We now prove Theorem~\ref{thm:q3-cycles}.
By Theorem~\ref{thm:reduction}, for every $k\ge 2$,
\[
    R(C_{3k},\Z_3)\le \max\{R(C_{6},\Z_3),3k+2\}.
\]
By Theorem~\ref{thm:lower}, for every $k\ge 1$,
$R(C_{3k},\Z_3)\ge 3k+2$.
Thus, to prove Theorem~\ref{thm:q3-cycles}, it suffices to establish the following lemma for $R(C_6,\Z_3).$

\begin{lemma}\label{lem:z3-c6-base}
    $R(C_6,\Z_3)=8$.
\end{lemma}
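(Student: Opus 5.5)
The lower bound $R(C_6,\Z_3)\ge 8$ is already given by Theorem~\ref{thm:lower}(i) with $q=3$, $k=2$: the cut labeling on $K_7$ with $|A|=2$, $|B|=5$, crossing edges labeled $1$, has no zero-sum $C_6$. So the plan is to prove the upper bound: every labeling $w\colon E(K_8)\to\Z_3$ contains a zero-sum $C_6$. We argue by contradiction, assuming $w$ has no zero-sum $C_6$, and extract enough structure to force one.

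\textbf{Step 1 (local switching).} Take any $C_6$ and any path $abcd$ on it. Replacing it by $acbd$ changes the weight by the switch defect $\Delta(a,b;c,d)$ from~\eqref{eq:switch-defect}. We also use a second move: replacing a cycle vertex $v$ (with neighbours $x,y$) by an outside vertex $u$ changes the weight by $w(xu)+w(uy)-w(xv)-w(vy)$. Since $8-6=2$ vertices always lie outside the cycle, there are many such replacements available.

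\textbf{Step 2 (structure of the defects).} Suppose two vertex-disjoint paths $abcd$ and $a'b'c'd'$ both have nonzero defect. A $C_6$ has only $6$ vertices, so two disjoint $4$-vertex segments do not fit. We therefore use overlapping structures instead. For $p=3$, two independent nonzero moves $\{0,\Delta_1\}+\{0,\Delta_2\}$ already cover $\Z_3$ by Cauchy--Davenport. So we look for a $C_6$ carrying two \emph{commuting} modifications that act on disjoint edge sets: for example, a switch on $abcd$ together with the replacement of a vertex $e$ on the remaining part $d\,e\,f\,a$ by an outside vertex. Each such independent pair forces one of the two increments to be $0$.

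\textbf{Step 3 (reduction to a potential labeling).} From Step~2 we aim to show that all switch defects vanish on a large set $U$, say $|U|\ge 6$. Lemma~\ref{lem:transfer weight} then gives $w(uv)=\lambda_u+\lambda_v+c$ on $U$. For a $C_6$ every vertex has degree $2$ and there are $6\equiv0$ edges, so such a labeling contributes $2\sum\lambda_v$, which is \emph{not} automatically zero. Consequently we cannot simply subtract it off as in the proof of Theorem~\ref{thm:prime-case}. Instead, $\sum\lambda_v$ over $6$-subsets must take a nonzero value, and we choose the subset to hit $0$. Among $8$ values $\lambda_v\in\Z_3$, EGZ-type reasoning (any $5$ elements of $\Z_3$ contain $3$ summing to $0$; applied twice, or via a direct check) should give a $6$-subset with sum $0$ unless the $\lambda$'s have a rigid form: essentially all equal except for a pattern mirroring the cut construction. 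The few vertices outside $U$ are then handled by case analysis on their types, just as the cut labeling shows that they must.

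\textbf{Main obstacle.} The hard part is Step~2/3 in so small a host graph. With only $8$ vertices and $6$ on the cycle, the disjointness arguments of Section~4 break down. I expect to need an explicit finite case analysis, possibly normalizing via a potential shift $w\mapsto w-(\mu_u+\mu_v)$ with $\sum\mu=0$ constraints, or via a monochromatic-majority argument (some label class has $\ge 10$ edges on $8$ vertices, and one can look for a $C_6$ there or nearly there). If the structural route stalls, I would fall back on the symmetry-reduced exhaustive verification of all labelings of $K_8$ up to isomorphism and potential equivalence, which is a finite check.
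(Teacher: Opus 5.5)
\textbf{Gap.} Your lower bound is fine; it is the paper's cut labeling. The upper bound, however, is not proved: Steps~2--3 describe a goal, not an argument.

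The local principle you state is correct, and it is in fact the engine of the paper's proof. Two moves on a $C_6$ that act on disjoint edge sets with nonzero increments $\alpha,\beta$ give weights $S,S+\alpha,S+\beta,S+\alpha+\beta$, which exhaust $\Z_3$. Your example (switching $abcd$ in $abcdef$ while replacing $e$ by an outside vertex) is a valid commuting pair. But it only yields a disjunction: $\Delta(a,b;c,d)=0$ or the replacement increment at $e$ is $0$. You never show how such disjunctions combine into the four-point relation on a set $U$ with $|U|\ge 6$, which you need before Lemma~\ref{lem:transfer weight} applies.

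Even granting such a $U$, the representation $w(uv)=\lambda_u+\lambda_v+c$ holds only on $U$. For $|U|\le 7$ it need not produce a zero-sum $6$-subset: the affine labeling of $K_7$ with $\lambda\equiv 0$ on a $5$-set and $\lambda\equiv 1$ on the other two vertices has no zero-sum $C_6$. So everything is decided by the vertices outside $U$, and ``case analysis on their types'' is unavailable here. The type pigeonhole of Section~4 needs $|U|$ exponentially large, and its cancellation $(s+2)c_x=0$ has no analogue when rim degrees are $2$. Your EGZ-twice endgame is correct only when $w$ is affine on all eight vertices.

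The fallbacks do not help either. A label class with $10$ edges need not contain a $C_6$ ($K_5$ doesn't). Indeed $K_8$ has $3$-colorings with no monochromatic $C_6$: give two disjoint $K_4$'s one color and split the $K_{4,4}$ between them into two copies of $K_{2,4}$. An unexecuted computer search is not a proof.

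\textbf{Missing idea.} The right commuting moves are vertex swaps across an antipodal pair. Fix $x,y$, set $r_t=w(xt)-w(yt)$, and use the $6$-cycle $xacybdx$. Swapping $a\leftrightarrow c$ changes the weight by $r_c-r_a$, and swapping $b\leftrightarrow d$ changes it by $r_b-r_d$. Hence in any $K_7$ without a zero-sum $C_6$, at least four of the five values $r_t$ coincide, for every pair $x,y$. Phrasing the constraint through the single function $t\mapsto r_t$ is what turns local disjunctions into global structure:
\begin{itemize}
\item If every pair is regular, Lemma~\ref{lem:z3-affine} makes $w$ affine on the $K_7$, and the seven $6$-cycles force the potentials into a $5+2$ pattern.
\item If some pair is singular, a further analysis (the cycles $xzuyvsx$, then cycles through two of $x,y,z$) gives the same shape.
\end{itemize}
In both cases there are a $5$-set $B$, vertices $p,q$, $b\in\Z_3$ and $\delta\ne 0$ with $w(BB)=b$, $w(B,\{p,q\})=b+\delta$ and $w(pq)\in\{b,b-\delta\}$.

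Only then is the eighth vertex $x$ treated. After normalizing to $b=0$ and $\delta=1$, the cycles through $x$ and all of $B$, together with the cycles $xiupvjx$, force $w(xi)=1$ for all $i\in B$. Then $xipjqkx$ is zero-sum. Your plan contains neither the pairwise-difference dichotomy nor the singular-case analysis, and together they are the whole content of the upper bound.
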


We will use the following lemma to express edge weights in terms of vertex weights.

\begin{lemma}\label{lem:z3-affine}
    Let $w\colon E(K_N)\to \Z_3$ be an edge-labeling.  Suppose that for every ordered pair of distinct vertices $(x,y)$, the difference
    $w(xv)-w(yv)$
    is independent of $v\in V(K_N)\setminus\{x,y\}$.
    Then there exist $\lambda_v$ for every $v\in V$ and a constant $c\in\Z_3$ such that for every edge $uv$,
    \[
        w(uv)=\lambda_u+\lambda_v+c.
    \]
\end{lemma}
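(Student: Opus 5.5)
The plan is to reduce to Lemma~\ref{lem:transfer weight} (which works over $\Z_p$ for any $p$, and in particular over $\Z_3$), or to build the potentials by hand in the same way. The hypothesis says that for each ordered pair $(x,y)$ there is a constant $\delta(x,y)\in\Z_3$ with $w(xv)-w(yv)=\delta(x,y)$ for all $v\notin\{x,y\}$. First I will check that this yields the four-point relation~\eqref{eq:four-point-relation} whenever $N\ge 4$. Take distinct $a,b,c,d$. Apply the hypothesis to the pair $(c,b)$, once with witness $v=a$ and once with witness $v=d$:
\[
w(ac)-w(ab)=\delta(c,b)=w(dc)-w(db).
\]
Rearranging gives $w(ac)+w(bd)=w(ab)+w(cd)$, which is exactly~\eqref{eq:four-point-relation}. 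Lemma~\ref{lem:transfer weight} with $U=V(K_N)$ then gives the representation $w(uv)=\lambda_u+\lambda_v+c$ for all distinct $u,v$.

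It remains to handle small $N$, where Lemma~\ref{lem:transfer weight} does not apply.
\begin{enumerate}[label=\textup{(\roman*)}]
\item For $N\le 2$ the statement is trivial: take all $\lambda_v=0$ and set $c=w(uv)$ for the single edge, if there is one.
\item For $N=3$, with vertices $x,y,z$, the hypothesis is vacuous. We need to solve $\lambda_x+\lambda_y=w(xy)-c$ and the two analogous equations. Take $c=0$. The resulting linear system in $\lambda_x,\lambda_y,\lambda_z$ has determinant $2$, which is invertible modulo $3$, so it has a solution. Explicitly, $\lambda_x=2^{-1}\bigl(w(xy)+w(xz)-w(yz)\bigr)=2\bigl(w(xy)+w(xz)-w(yz)\bigr)$, and cyclically for $\lambda_y,\lambda_z$.
\end{enumerate}

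In the application (Lemma~\ref{lem:z3-c6-base}) we only need $N\ge 4$ anyway, but covering all $N$ is cheap.

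The only point requiring care is the main step: choosing the correct pair $(c,b)$ and the two witnesses $a,d$ so that the hypothesis produces precisely~\eqref{eq:four-point-relation} with the right pairing of edges. Everything after that is inherited from Lemma~\ref{lem:transfer weight}.

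For a self-contained alternative, I would fix $a$ and define $\lambda_x=w(xa)$ for $x\ne a$. I would then use $\delta(x,y)$ to show that $w(xy)-\lambda_x-\lambda_y$ is constant, and finally choose $\lambda_a$ to match, exactly as in the proof of Lemma~\ref{lem:transfer weight}.
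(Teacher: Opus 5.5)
Your proof is correct, but it goes through a different key lemma than the paper's proof does. You observe that the hypothesis for the ordered pair $(c,b)$, evaluated at the witnesses $a$ and $d$, is exactly the four-point relation~\eqref{eq:four-point-relation}. So for $N\ge4$ the statement becomes a corollary of Lemma~\ref{lem:transfer weight}, which applies verbatim with $p=3$, and you handle $N\le3$ separately.

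The paper instead argues directly from a single anchor vertex $x$. It lets $\mu_v$ be the common difference for the pair $(v,x)$ and reads off $w(uv)=w(xu)+\mu_v=w(xv)+\mu_u$ for distinct $u,v\ne x$. From this it concludes that $w(xu)-\mu_u$ is a constant $c$, and it takes $\lambda_x=0$, $\lambda_u=\mu_u$.

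What the paper's route buys:
\begin{itemize}
\item It is self-contained and works over any abelian group.
\item It needs no separate case for $N=3$, because the free constant $c$ absorbs everything there. Your use of $2^{-1}\in\Z_3$ with $c=0$ is valid but unnecessary.
\end{itemize}

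What your route buys: it shows that for $N\ge4$ the pairwise-difference hypothesis and the four-point relation are the same condition. The converse direction is just your one-line computation read backwards. Consequently, Lemma~\ref{lem:z3-affine} for $N\ge4$ is a special case of Lemma~\ref{lem:transfer weight}, rather than a separate result needing its own proof.
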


\begin{proof}
    We fix a vertex $x$.
    For every $v\ne x$, consider the ordered pair $(v,x)$, let $\mu_v\coloneqq w(vt)-w(xt)$ be the common difference for every $t\in V(K_N)\setminus\{x,v\}$.
    Then, for distinct $u,v\ne x$ we have
    \[
        w(uv)=w(xu)+\mu_v=w(xv)+\mu_u.
    \]
    Thus, $w(xu)-\mu_u$ is independent of $u\ne x$; denote this common value by $c$.
    Let $\lambda_x=0$ and $\lambda_u=\mu_u$ for $u\ne x$.
    Then, we have $w(xu)=\mu_u+c=\lambda_x+\lambda_u+c$  for every $u\ne x$, and
    \[
        w(uv)=w(xu)+\mu_v=\lambda_u+\lambda_v+c
    \]
    for distinct $u,v\ne x$.
    This completes the proof of Lemma~\ref{lem:z3-affine}.
\end{proof}

\subsection{Proof of Lemma~\ref{lem:z3-c6-base}}
\begin{proof}
    Now, we prove $R(C_6,\Z_3)=8$.
    The lower bound $R(C_6,\Z_3)\ge 8$ follows from the construction in Theorem~\ref{thm:lower}.
    It remains to prove the upper bound $R(C_6,\Z_3)\le 8$.

    We first characterize the structure of edge-labelings of $K_7$ containing no zero-sum $C_6$.

    Let $w\colon E(K_7)\to\Z_3$ be an edge-labeling containing no zero-sum $C_6$.
    Fix distinct vertices $x,y$ and for $t\in V(K_7)\setminus\{x,y\}$, let
    \[
        r_t=w(xt)-w(yt).
    \]
    We first claim that among these five values, at least four are equal.
    Indeed, suppose on the contrary that no value occurs four times.
    Consider the largest value class,
    it has at least two vertices, and its complement also has at least two vertices.  Choosing two vertices $a,b$ from this class and two vertices $c,d$ from its complement, the two disjoint pairs $\{a,c\}$ and $\{b,d\}$ both have unequal $r$-values.
    Suppose that the $6$-cycle $xacybdx$ has weight $S$.
    Consider the other three $6$-cycles $xcaybdx$, $xacydbx$, and $xcaydbx$.
    It is not hard to see that they have weights
    \[
        \quad S+\alpha,
        \quad S+\beta,
        \quad S+\alpha+\beta
    \]
    with $\alpha=r_c-r_a\ne 0, \text{ and } \beta=r_b-r_d\ne0$.
    In $\Z_3$, it is impossible for all four of $S,S+\alpha, S+\beta, S+\alpha+\beta$ to be non-zero, contradicting the assumption that there is no zero-sum $C_6$.

    We call the pair $xy$ \emph{regular} if all five values $r_t$ are equal, and \emph{singular} otherwise.
    Thus, a singular pair has exactly one exceptional vertex.

    First, suppose that every pair is regular.
    Then, for every ordered pair $(x,y)$, the difference $w(xv)-w(yv)$
    is independent of $v\in V(K_7)\setminus\{x,y\}.$
    By Lemma~\ref{lem:z3-affine}, there exist $\lambda_v$ for every $v\in V(K_7)$ and a constant $c\in\Z_3$, such that for every $uv\in E(K_7),$ $w(uv)=\lambda_u+\lambda_v+c.$
    Let $T=\sum_v\lambda_v$.
    Note that a $6$-cycle on the vertex set $V(K_7)\setminus\{v\}$ has weight $2(T-\lambda_v)+6c=\lambda_v-T$ in $\Z_3$.
    Since no such cycle is zero-sum, $\lambda_v\ne T$ for every $v$.
    Hence, the $7$ labels $\lambda_v$ use only the two residues different from $T$.
    If one of these residues occurs $r$ times, then the identity $\sum_v\lambda_v=T$ gives $r\equiv2\pmod3$, so the two multiplicities are $5$ and $2$.  Consequently, there is a $5$-set $B$, two vertices $p,q$, and elements $b\in\Z_3$, $\delta\in\Z_3^\times$ such that
    \begin{equation}\label{Equ:regular structure for K7}
        w(BB)=b,
        \qquad
        w(B,\{p,q\})=b+\delta,
        \qquad
        w(pq)=b-\delta.
    \end{equation}
    Here, for example, $w(BB)=b$ means that every edge with both endpoints in $B$ has weight $b$.
    \medskip

    Now, suppose that some pair $xy$ is singular.
    We will obtain a structure similar to the one above.
    Let $z$ be its unique exceptional vertex and let
    $A=V(K_7)\setminus\{x,y,z\}.$
    Then $|A|=4$, and there are $a,b\in\Z_3$ with $a\ne b$ such that
    \begin{equation}\label{Equ:define a,b}
        w(xu)-w(yu)=a\quad(\text{for all } u\in A),
        \qquad
        w(xz)-w(yz)=b.
    \end{equation}
    Let $\delta=a-b\ne0$ and let $\ell_u=w(yu)$ for all $u\in A$.

    For distinct $u,v,s\in A$, consider the cycle $xzuyvsx$.
    Interchanging $z$ and $u$ changes its weight by $\delta$.
    Since both resulting $6$-cycles have a non-zero sum, the original cycle has weight $\delta$.
    Therefore, we get that
    \[
        w(xz)+w(zu)+w(uy)+w(yv)+w(vs)+w(sx)=\delta.
    \]
    Let
    \begin{equation}\label{Equ:Fu,Gvs}
        F_u=w(zu)+\ell_u, \quad G_{vs}=w(vs)+\ell_v+\ell_s.
    \end{equation}
    Since $w(sx)=a+\ell_s$, we obtain that
    \[
        w(xz)+F_u+G_{vs}+a=\delta.
    \]
    Thus, there is a constant $K\in \Z_3$ independent of $u,v,s$ such that for distinct $u,v,s\in A$,
    \[
        F_u+G_{vs}=K.
    \]
    For any two vertices $u,t\in A$, letting $\{v,s\}=A\setminus\{u,t\}$, the two triples $(u,v,s)$ and $(t,v,s)$ yield
    \[
        F_u+G_{vs}=F_t+G_{vs}=K.
    \]
    Thus, $F_u=F_t$, which implies that all $F_u$ are equal.
    Moreover, the equation $F_u+G_{vs}=K$ forces all $G_{vs}$ to be equal as well.

    For any two distinct vertices $u,v\in A$, let $d=\ell_u-\ell_v$. Since all $G_{vs}$ are equal and all $F_u$ are equal, combining~\eqref{Equ:define a,b} with~\eqref{Equ:Fu,Gvs}, the five differences $w(ut)-w(vt)$, where $t\notin\{u,v\}$, are
    \[
        d,d,-d,-d,-d.
    \]
    The two entries $d$ come from $t=x,y$; the three entries $-d$ come from $t=z$ and the other two vertices of $A$.
    Since at least four of the five differences must be equal, $d=0$.
    Thus, for $u\in A$, all $\ell_u$ are equal.
    From~\eqref{Equ:define a,b} and~\eqref{Equ:Fu,Gvs}, it follows that all edges inside $A$ have one common weight, and each of $x,y,z$ has a constant weight to $A$.

    By subtracting this common weight on $AA$ from every edge, this does not change any $C_6$-weight, since $6\equiv0\pmod3$.
    Thus, we may assume that $w(AA)=0$.
    Let
    \[
        X=w(x,A),
        \qquad
        Y=w(y,A),
        \qquad
        Z=w(z,A).
    \]
    Consider the cycles using two of $x,y,z$ and all four vertices of $A$ yield
    \begin{equation}\label{Equ:X,Y,Z}
        X+Y\ne0,
        \qquad
        X+Z\ne0,
        \qquad
        Y+Z\ne0.
    \end{equation}
    A cycle of the form $xuyvzsx$, with distinct $u,v,s\in A$ gives
    \begin{equation}\label{Equ:X+Y+Z}
        X+Y+Z\ne0.
    \end{equation}
    These four inequalities in~\eqref{Equ:X,Y,Z} and~\eqref{Equ:X+Y+Z} in $\Z_3$ imply that exactly one of $X,Y,Z$ is zero and the other two are equal and non-zero.
    Indeed, if none of $X,Y,Z$ is zero, then the pairwise inequalities force $X=Y=Z$, contradicting $X+Y+Z\ne0$.  If two of them are zero, then one pair sum is zero.  Hence exactly one is zero, and the other two must be equal and non-zero.
    Relabel $x,y,z$ so that $X=0$ and $Y=Z=\eta\ne0$.
    Let $h=w(xy)$, $p'=w(xz)$ and $m=w(yz)$.
    Cycles in which $xy$ is adjacent and the remaining four vertices lie in $A$, together with cycles using $xy$, $z$, and three vertices of $A$, imply that $h+\eta\ne0$ and $h\ne0$.
    Hence $h=\eta$.
    Similarly $p'=\eta$.
    The cycles in which $yz$ is adjacent and the remaining four vertices lie in $A$ give $2\eta+m\ne0$, so $m\ne\eta$.  If $m=2\eta$, then every pair in this $K_7$ is regular.  Indeed, pairs inside $A$ are regular; pairs between $A$ and one of $x,y,z$ are regular; and the three pairs among $x,y,z$ are regular by the identities $X=0$, $Y=Z=\eta$, $h=p'=\eta$, and $m=2\eta$.  This contradicts the assumption that we are in the singular case.  Hence $m\ne2\eta$.
    Therefore $m=0$.
    Let $B=A\cup\{x\}$.
    Thus, the singular case also has the same structural form:
    \begin{equation}\label{Equ:irregular structure for K7}
        w(BB)=0,
        \qquad
        w(B,\{y,z\})=\eta,
        \qquad
        w(yz)=0.
    \end{equation}

    Combining~\eqref{Equ:irregular structure for K7} with the regular case~\eqref{Equ:regular structure for K7}, we get that every $K_7$ containing no zero-sum $C_6$ has a $5$-set $B$, two vertices $p,q$, and elements $b\in\Z_3$, $\delta\in\Z_3^\times$ such that
    \begin{equation}\label{Equ:structure for K7}
        w(BB)=b,
        \qquad
        w(B,\{p,q\})=b+\delta,
        \qquad
        w(pq)\in\{b,b-\delta\}.
    \end{equation}

    Now, suppose that $w\colon E(K_8)\to\Z_3$ containing no zero-sum $C_6$.
    Choose 7 vertices and apply the structure conclusion~\eqref{Equ:structure for K7} described above.
    After replacing every edge $w(e)$ with $\delta^{-1}(w(e)-b)$, which preserves the zero-sum property of any $C_6$, we may assume that among those $7$ vertices, there is a 5-vertex set $B$ and two additional vertices $p,q$ satisfying
    \[
        w(BB)=0,
        \qquad
        w(Bp)=w(Bq)=1,
        \qquad
        w(pq)\in\{0,2\}.
    \]
    Let $x$ be the eighth vertex, and let $a_u=w(xu)$ for $u\in B$.
    For distinct $i,j\in B$, a $6$-cycle passing through $x$ and all five vertices of $B$ has weight $a_i+a_j$.
    Hence, $a_i+a_j\ne0$.
    For distinct $i,j,u,v\in B$, the cycle $xiupvjx$ has weight $a_i+a_j+2$.
    Hence $a_i+a_j\ne1$.
    Therefore $a_i+a_j=2$ for all distinct $i,j\in B$, which implies that $a_i=1$ for all $i\in B$.
    Finally, for distinct $i,j,k\in B$, the cycle $xipjqkx$ has all six edge weights equal to $1$, making it a zero-sum cycle in $\Z_3$.  This contradiction proves that every $K_8$-labeling contains a zero-sum $C_6$, which completes the proof of Lemma~\ref{lem:z3-c6-base}.
\end{proof}

\section{The Wheel Case Modulo Three: Proof of Theorem~\ref{thm:q3-wheel}}

We now prove Theorem~\ref{thm:q3-wheel}.  Recall that $W_m=C_m+K_1$ denotes the wheel with rim length $m$.

\begin{proof}[Proof of Theorem~\ref{thm:q3-wheel}]
    The lower bound follows from the vertex count: $W_{3k}$ has $3k+1$ vertices.
    It remains to prove that every labeling $w\colon E(K_{3k+1})\to\Z_3$ contains a zero-sum copy of $W_{3k}$.

    Let $n=3k$ and let $V$ be the vertex set of $K_{n+1}$.
    For a vertex $h$ and a Hamilton cycle $C$ on $V\setminus\{h\}$, we define
    \[
        \Phi_h(C)=\sum_{e\in C}w(e)+\sum_{v\in V\setminus\{h\}}w(hv).
    \]
    Thus, $\Phi_h(C)$ is the weight of the wheel with center $h$ and rim $C$.  Suppose, for the contrary, that
    \begin{equation}\label{Equ:Phi ne 0}
        \Phi_h(C)\ne0
    \end{equation}
    for every $h$ and every Hamilton cycle $C$ on $V\setminus\{h\}$.

    Fix distinct vertices $x,y$.
    For $v\in V\setminus\{x,y\}$, define $\rho(v)=w(xv)-w(yv)$.
    We first claim that among the $n-1$ values $\rho(v)$, at least $n-2$ must be equal.  Indeed, otherwise, a largest value class would contain at least two vertices, and its complement would also have at least two vertices.
    We could therefore choose two disjoint pairs $\{a,c\}$ and $\{b,d\}$ such that $\rho(a)\ne\rho(c)$ and $\rho(b)\ne\rho(d)$.
    Since $n\ge6$, there exists a vertex $h\notin\{x,y,a,b,c,d\}$.
    We partition the remaining vertices of $V\setminus\{h,x,y,a,b,c,d\}$ into two ordered paths $P$ and $Q$ (which may be empty), and form a Hamilton cycle on $V\setminus\{h\}$ of the form
    \[
        C=xaPcybQdx.
    \]
    Given a path $T=t_1,\ldots,t_k$, let $T^{\rm rev}$ denote the path $t_k,\ldots,t_1$.
    Interchanging $a,c$ means replacing the subpath $aPc$ by $cP^{\rm rev}a$; this changes $\Phi_h(C)$ by $\alpha\coloneqq\rho(c)-\rho(a)\ne0$.
    Similarly, interchanging $b,d$ means replacing $bQd$ by $dQ^{\rm rev}b$; this changes $\Phi_h(C)$ by $\beta\coloneqq\rho(b)-\rho(d)\ne0$.
    By interchanging the pairs $a,c$ or $b,d$, we will get four different wheels whose weights are $S,S+\alpha,S+\beta$ and $S+\alpha+\beta$ with $\alpha,\beta\ne0$.
    In $\Z_3$ these four elements cannot all be non-zero, which contradicts~\eqref{Equ:Phi ne 0}.
    Therefore, every pair $xy$ is either regular, meaning all values of $\rho(v)$ are equal, or singular, meaning there is exactly one exceptional vertex.
    If every pair is regular, Lemma~\ref{lem:z3-affine} yields
    \[
        w(uv)=\lambda_u+\lambda_v+c.
    \]
    However, in $W_n$, every rim vertex has degree $3$, the center has degree $n\equiv0\pmod3$, and $|E(W_n)|=2n\equiv0\pmod3$.
    Hence every copy of $W_n$ has total weight zero, which is a contradiction to~\eqref{Equ:Phi ne 0}.
    Therefore, there must exist a singular pair.

    Let $xy$ be a singular pair, let $z$ be its exceptional vertex, and put $A=V\setminus\{x,y,z\}$.
    Then $|A|=n-2\ge4$.
    By the definition of a singular pair, there exist $a,b\in\Z_3$ with $a\ne b$, such that
    \begin{equation}\label{Equ:singular wheel a,b}
        w(xu)-w(yu)=a\quad(\text{for all }u\in A),
        \qquad
        w(xz)-w(yz)=b.
    \end{equation}
    Let $\delta=a-b\ne0$ and $\ell_u=w(yu)$ for $u\in A$.  For $h\in A$, define
    \[
        D_h=\sum_{v\in V\setminus\{h\}}w(hv).
    \]

    Take distinct $u,h\in A$, and let $v_1,\ldots,v_s$ be an ordering of $A\setminus\{u,h\}$, where $s=n-4$.  Consider the wheel with center $h$ and rim
    \[
        xzuyv_1\cdots v_sx.
    \]
    Interchanging $z$ and $u$ changes the wheel's weight by $a-b=\delta$.
    Since both resulting wheels have non-zero sums, the original wheel must have weight $\delta$.
    Therefore, we obtain that
    \begin{equation}\label{Equ:wheel sum}
        w(xz)+w(zu)+w(uy)+w(yv_1)+\sum_{i=1}^{s-1}w(v_i v_{i+1})+w(v_sx)+D_h=\delta.
    \end{equation}
    We define
    \begin{equation}\label{Equ:define F and H}
        F(u)=w(zu)+\ell_u \quad \text{and}\quad
        H(v_1,\ldots,v_s)=\ell_{v_1}+\sum_{i=1}^{s-1}w(v_i v_{i+1})+\ell_{v_s}.
    \end{equation}
    Substituting the above equalities into~\eqref{Equ:wheel sum}, and by~\eqref{Equ:singular wheel a,b}, there is a constant $C_0\in \Z_3$ independent of $u,h,v_1,\ldots,v_s$ such that
    \begin{equation}\label{Equ:F,D,H}
        F(u)+D_h+H(v_1,\ldots,v_s)=C_0.
    \end{equation}
    Interchanging the roles of $u$ and $h$ in~\eqref{Equ:F,D,H} yields $D_h-F(h)=D_u-F(u)$.
    Thus, we may assume that there is a constant $c_1\in\Z_3$ such that $D_i-F(i)=c_1$ for every $i\in A$.
    Therefore, \eqref{Equ:F,D,H} implies that there is a constant $C_1\in\Z_3$ such that
    \begin{equation}\label{Equ:H+F+F=C}
        H(A\setminus\{i,j\})+F(i)+F(j)=C_1
    \end{equation}
    for every pair $i,j\in A$, and the value of $H$ is independent of the ordering of its arguments.

    First, assume that $k\ge3$.
    Then $|A|\ge7$.
    Since $|A|\ge7$, for any distinct $r,p,t\in A$ we may choose $i,j\in A\setminus\{r,p,t\}$.  Applying~\eqref{Equ:H+F+F=C} to the same pair $i,j$ and to two orderings of $A\setminus\{i,j\}$ beginning with $r,p,t,\ldots$ and $p,r,t,\ldots$, respectively, using~\eqref{Equ:define F and H}, we have
    \[
        \ell_r+w(rp)+w(pt)=\ell_p+w(pr)+w(rt),
    \]
    and hence, for a fixed $t$, the value $w(rt)-\ell_r$ is independent of $r\ne t$.
    By symmetry, $w(rt)-\ell_t$ is independent of $t\ne r$.
    Since $r$ and $t$ are arbitrarily chosen from $A$, there is a constant $c\in\Z_3$ such that $w(ij)=\ell_i+\ell_j+c$ with $i,j\in A$ and $i\ne j$.
    Substituting this into~\eqref{Equ:H+F+F=C} gives
    \begin{equation*}
        2\sum_{v\in A\setminus\{i,j\}}\ell_v+c(s-1)+w(zi)+\ell_i+w(zj)+\ell_j=C_1,
    \end{equation*}
    which implies that
    \begin{equation*}
        2\sum_{v\in A}\ell_v+c(s-1)+\bigl(w(zi)-\ell_i\bigr)+
        \bigl(w(zj)-\ell_j\bigr)=C_1.
    \end{equation*}
    Thus, there exists a constant $C_2\in \Z_3$, independent of the pair $i,j$, such that
    \[
        \bigl(w(zi)-\ell_i\bigr)+\bigl(w(zj)-\ell_j\bigr)=C_2
        \qquad(\text{for all }i\ne j\in A),
    \]
    and therefore, for some constant $g\in\Z_3$, we have
    \[
        w(zi)=\ell_i+g \qquad (\text{for all }i\in A).
    \]

    Now assume that $k=2$.
    Then, $|A|=4$.
    Equation~\eqref{Equ:H+F+F=C} states that, for every partition $A=\{r,s\}\cup\{u,h\}$, there exists a constant $C_3\in \Z_3$, independent of the partition, such that $w(rs)+w(zu)+w(zh)=C_3$.
    Let $m_u=w(zu)$, this gives $w(rs)=m_r+m_s+c'$ for some constant $c'$.
    For $u,v\in A$, compare the five differences from $u$ and $v$ to the other vertices.
    By~\eqref{Equ:singular wheel a,b}, the two differences to $x,y$ are $\ell_u-\ell_v$, while the three differences to $z$ and to the other two vertices of $A$ are $m_u-m_v$.
    Since at least four of the five differences are equal, $m_u-m_v=\ell_u-\ell_v$.
    Hence, for suitable constants $c,g\in\Z_3$, for all $u\in A$, $m_u=\ell_u+g$ and again
    \[
        w(uv)=\ell_u+\ell_v+c,
        \qquad
        w(zu)=\ell_u+g
        \qquad(\text{for all } u,v\in A,\ u\ne v)
    \]

    In all cases, for $u,v\in A$, $u\ne v$, we have
    \begin{equation}\label{Equ:wheel structure}
        w(yu)=\ell_u,
        \qquad
        w(xu)=\ell_u+a,
        \qquad
        w(zu)=\ell_u+g,
        \qquad
        w(uv)=\ell_u+\ell_v+c.
    \end{equation}
    Define a weight $q_0(uv)=\lambda_u+\lambda_v+c$ by
    \[
        \lambda_u=\ell_u\ (\text{for all }u\in A),
        \qquad
        \lambda_y=-c,
        \qquad
        \lambda_x=a-c,
        \qquad
        \lambda_z=g-c.
    \]
    Then, $q_0$ agrees with $w$ on every edge incident with at least one vertex of $A$.  Replacing $w$ with $w-q_0$ preserves the weight of every $W_n$, because every vertex degree in $W_n$ is divisible by $3$ and $|E(W_n)|=2n\equiv0\pmod3$.
    Hence, we may assume that all edges incident with $A$ have weight $0$.
    Only the three edges $xy,xz,yz$ may be non-zero.

    We choose a center $h\in A$.
    Since $|A\setminus\{h\}|=n-3\ge3$, we can order the rim vertices so that $x,y,z$ are pairwise non-adjacent.
    For instance, we may choose $xa_1ya_2za_3\cdots x$
    with $a_1,a_2,a_3\in A\setminus\{h\}$ distinct and all remaining vertices of $A\setminus\{h,a_1,a_2,a_3\}$ inserted in the final path.
    This wheel uses no edge among $x,y,z$, and all its other edges are incident with $A$.
    Hence, its total weight is zero, contradicting the assumption~\eqref{Equ:Phi ne 0}.

    Therefore, every labeling of $K_{3k+1}$ contains a zero-sum $W_{3k}$, and
    \[
        R(W_{3k},\Z_3)=3k+1,
    \]
    which completes the proof of Theorem~\ref{thm:q3-wheel}.
\end{proof}

\section*{Acknowledgement}
The authors are grateful to Alexandru Malekshahian for his helpful comments that improved the presentation, as well as for sharing his insights on this problem and providing relevant references.

\section*{Appendix A: Justification of~$(35q^2)^{1/q}<F_q$ for $q\ge 3 $}\label{app:ineq-proof}
Recall that $F_q=\frac32+\frac{35}{2(q-1)}-\frac{1}{2q(q-1)}$.
For $3\le q\le 6$, the following elementary bounds suffice:
\[
    \begin{array}{c|c|c}
        q & (35q^2)^{1/q}\text{ is less than} & F_q\text{ is greater than} \\ \hline
        3 & 7                                 & 10                         \\
        4 & 5                                 & 7                          \\
        5 & 4                                 & 5                          \\
        6 & 4                                 & 4
    \end{array}
\]
For example, the left inequalities follow from
$315<7^3$, $560<5^4$, $875<4^5$, and $1260<4^6$.

For $7\le q\le 12$, we have $35q^2<3^q$ since it holds at $q=7$ and the ratio $3^q/q^2$ is increasing for $q\ge 2$.
Thus, $(35q^2)^{1/q}<3$.
Additionally, we have $F_q>3$
for $7\le q\le 12$.
Hence, the desired inequality holds in this range.

For $13\le q\le 35$, we have $35q^2<2^q$, since it holds at $q=13$ and the ratio $2^q/q^2$ is increasing for $q\ge 4$.
Thus, $(35q^2)^{1/q}<2$.
Moreover $F_q>2$ for $q\le 35$, since $F_q>2$ is equivalent to $36-\frac1q>q$, which is true for $13\le q\le 35$.

Finally, for $q\ge 36$ we have
\[
    35q^2<\left(\frac32\right)^q.
\]
Indeed, this holds at $q=36$, since
\[
    \left(\frac32\right)^{36}=\left(\frac94\right)^{18}>2^{18}>35\cdot 36^2,
\]
and the ratio $(3/2)^q/q^2$ is increasing for $q\ge 5$.
Thus, $(35q^2)^{1/q}<3/2$.  Since $F_q>3/2$ for every $q\ge 3$, the desired inequality follows.
\qed

\end{document}